\providecommand{\U}[1]{\protect\rule{.1in}{.1in}}
\newtheorem{theorem}{Theorem}
\newtheorem{corollary}[theorem]{Corollary}
\newtheorem{example}[theorem]{Example}
\newtheorem{lemma}[theorem]{Lemma}
\newtheorem{proposition}[theorem]{Proposition}
\newtheorem{remark}[theorem]{Remark}
\numberwithin{table}{section}
\newenvironment{proof}[1][Proof]{\noindent\textbf{#1.} }{\ \rule{0.5em}{0.5em}}
\begin{document}

\title{Uniform approximation of the Cox-Ingersoll-Ross process}
\author{G.N. Milstein\thanks{Ural Federal University, Lenin Str.~51, 620083
Ekaterinburg, Russia; email: Grigori.Milstein@usu.ru}
\and J.G.M. Schoenmakers\thanks{Weierstrass-Institut f\"{u}r Angewandte Analysis
und Stochastik, Mohrenstrasse 39, 10117 Berlin, Germany; email:
schoenma@wias-berlin.de}}
\maketitle

\begin{abstract}
The Doss-Sussmann (DS) approach is used for uniform simulation of the
Cox-Ingersoll-Ross (CIR) process. The DS formalism allows to express
trajectories of the CIR process through solutions of some ordinary
differential equation (ODE) depending on realizations of a Wiener process
involved. By simulating the first-passage times of the increments of the
Wiener process to the boundary of an interval and solving the ODE, we
uniformly approximate the trajectories of the CIR process. In this respect
special attention is payed to simulation of trajectories near zero. From a
conceptual point of view the proposed method gives a better quality of
approximation (from a path-wise point of view) than standard, or even exact
simulation of the SDE at some discrete time grid.

\noindent\textbf{AMS 2000 subject classification. }Primary 65C30; secondary 60H35.

\noindent\textbf{Keywords}. Cox-Ingersoll-Ross process, Doss-Sussmann
formalism, Bessel functions, confluent hypergeometric equation.

\end{abstract}

\section{Introduction}

The Cox-Ingersoll-Ross process $V(t)$ is determined by the following
stochastic differential equation (SDE)%
\begin{equation}
dV(t)=k(\lambda-V(t))dt+\sigma\sqrt{V}dw(t),\ V(t_{0})=V_{0}, \label{In1}%
\end{equation}
where $k,\ \lambda,\ \sigma$ are positive constants, and $w$ is a scalar
Brownian motion. Due to \cite{CIR} this process has become very popular in
financial mathematical applications. The CIR process is used in particular as
volatility process in the Heston model \cite{Hest}. It is known (\cite{IW},
\cite{KS}) that for $V_{0}>0$ there exists a unique strong solution
$V_{t_{0},V_{0}}(t)$ of (\ref{In1}) for all $t\geq t_{0}\geq0$. The
CIR\ process $V(t)=V_{t_{0},V_{0}}(t)$ is positive in the case $2k\lambda
\geq\sigma^{2}$ and nonnegative in the case $2k\lambda<\sigma^{2}.$ Moreover,
in the last case the origin is a reflecting boundary.

As a matter of fact, (\ref{In1}) does not satisfy the global Lipschitz
assumption. The difficulties arising in a simulation method for (\ref{In1})
are connected with this fact and with the natural requirement of preserving
nonnegative approximations. A lot of approximation methods for the CIR
processes are proposed. For an extensive list of articles on this subject we
refer to \cite{Anders} and \cite{DNS}. Besides \cite{Anders} and \cite{DNS} we
also refer to \cite{A1, A2, HM, HMS}, where a number of discretization schemes
for the CIR process can be found. Further we note that in \cite{MT2} a weakly
convergent fully implicit method is implemented for the Heston model. Exact
simulation of (\ref{In1}) is considered in \cite{BK, Gl} (see \cite{Anders} as well).

In this paper, we consider uniform pathwise approximation of $V(t)$ on an
interval $[t_{0},t_{0}+T]$ using the Doss-Sussmann transformation
(\cite{Doss}, \cite{SS}, \cite{RW}) which allows for expressing any trajectory
of $V(t)$ by the solution of some ordinary differential equation that depends
on the realization of $w(t).$ The approximation $\overline{V}(t)$ will be
uniform in the sense that the path-wise error will be uniformly bounded, i.e.%
\begin{equation}
\sup_{t_{0}\leq t\leq t_{0}+T}\left\vert \overline{V}(t)-V(t)\right\vert \leq
r\text{ \ \ almost surely,} \label{Uni}%
\end{equation}
where $r>0$ is fixed in advance. In fact, by simulating the first-passage
times of the increments of the Wiener process to the boundary of an interval
and solving this ODE, we approximately construct a generic trajectory of
$V(t).$ Such kind of simulation is more simple than the one proposed in
\cite{BK} and moreover has the advantage of uniform nature. Let us consider
the simulation of a standard Brownian motion $W$ on a fixed time grid
\[
t_{0},t_{i},...,t_{n}=T.
\]
Although $W$ may be even exactly simulated at the grid points, the usual
piecewise linear interpolation%
\[
\overline{W}(t)=\frac{t_{i+1}-t}{t_{i+1}-t_{i}}W(t_{i})+\frac{t-t_{i}}%
{t_{i+1}-t_{i}}W(t_{i+1})
\]
is not uniform in the sense of (\ref{Uni}). Put differently, for any (large)
positive number $A,$ there is always a positive probability (though possibly
small) that%
\[
\sup_{t_{0}\leq t\leq t_{0}+T}\left\vert \overline{W}(t)-W(t)\right\vert >A.
\]
Therefore, for path dependent applications for instance, such a standard, even
exact, simulation method may be not desirable and a uniform method preserving
(\ref{Uni}) may be preferred.

We note that the original DS results rely on a global Lipschitz assumption
that is not fulfilled for (\ref{In1}). We therefore have introduced the DS
formalism that yields a corresponding ODE which solutions are defined on
random time intervals. If $V$ gets close to zero however, the ODE becomes
intractable for numerical integration and so, for the parts of a trajectory
$V(t),$ that are close to zero, we are forced to use some other (not DS)
approach. For such parts we here propose a different uniform simulation
method. Another restriction is connected with the condition $\alpha:=\left(
4k\lambda-\sigma^{2}\right)  /8>0.$ We note that the case $\alpha>0$ is more
general than the case $2k\lambda\geq\sigma^{2}$ that ensures positivity of
$V(t),$ and stress that in the literature virtually all convergence proofs for
methods for numerical integration of (\ref{In1}) are based on the assumption
$2k\lambda\geq\sigma^{2}.$ We expect that the results here obtained for
$\alpha>0$ can be extended to the case where $\alpha\leq0,$ however in a
highly nontrivial way. Therefore, the case $\alpha\leq0$ will be considered in
a subsequent work.

The next two sections are devoted to DS formalism in connection with
(\ref{In1}) and to some auxiliary propositions. In Sections~4 and 5 we deal
with the one-step approximation and the convergence of the proposed method,
respectively. Section~6 is dedicated to the uniform construction of
trajectories close to zero.

\section{The Doss-Sussmann transformation}

\textbf{2.1} Due to the Doss-Sussmann approach (\cite{Doss}, \cite{IW},
\cite{RW}, \cite{SS}), the solution of (\ref{In1}) may be expressed in the
form%
\begin{equation}
V(t)=F(X(t),w(t)), \label{DS1}%
\end{equation}
where $F=F(x,y)$ is some deterministic function and $X(t)$ is the solution
\ of some ordinary differential equation depending on the part $w(s),\ 0\leq
s\leq t,$ of the realization $w(\cdot)$ of the Wiener process $w(t).$

Let us recall the Doss-Sussmann formalism according to \cite{RW}, V.28. In
\cite{RW} one consideres the Stratonovich SDE
\begin{equation}
dV(t)=b(V)dt+\gamma(V)\circ dw(t). \label{DS5}%
\end{equation}
The function $F=F(x,y)$ is found from the equation
\begin{equation}
\frac{\partial F}{\partial y}=\gamma(F),\ F(x,0)=x, \label{DS6}%
\end{equation}
and $X(t)$ is found from the ODE%
\begin{equation}
\frac{dX}{dt}=\frac{1}{\partial F/\partial x(X(t),w(t))}%
b(F(X(t),w(t)),\ X(0)=V(0). \label{DS7}%
\end{equation}

It turns out that application of the DS formalism after the Lamperti
transformation $U(t)=\sqrt{V(t)}$ (see \cite{DNS}) leads to more simple
equations. The Lamperti transformation yields the following SDE with additive
noise%
\begin{align}
dU  &  =(\frac{\alpha}{U}-\frac{k}{2}U)dt+\frac{\sigma}{2}dw,\ U(0)=\sqrt
{V(0)}>0,\text{ \ \ where}\label{DS8}\\
\alpha &  =\dfrac{4k\lambda-\sigma^{2}}{8}. \label{DS8a}%
\end{align}
Let us seek the solution of (\ref{DS8}) in the form
\begin{equation}
U(t)=G(Y(t),w(t)) \label{DS2}%
\end{equation}
in accordance with (\ref{DS1})-(\ref{DS7}). Because the Ito and Stratonovich
forrms of equation (\ref{DS8}) coincide, we have%
\[
b(U)=\frac{\alpha}{U}-\frac{k}{2}U,\ \gamma(U)=\frac{\sigma}{2}.
\]
The function $G=G(y,z)$ is found from the equation
\[
\frac{\partial G}{\partial z}=\frac{\sigma}{2},\ G(y,0)=y,
\]
i.e.,%
\begin{equation}
G(y,z)=y+\frac{\sigma}{2}z, \label{DS3}%
\end{equation}
and $Y(t)$ is found from the ODE%
\begin{equation}
\frac{dY}{dt}=\frac{\alpha}{Y+\frac{\sigma}{2}w(t)}-\frac{k}{2}(Y+\frac
{\sigma}{2}w(t)),\ Y(0)=U(0)=\sqrt{V(0)}>0. \label{DS4}%
\end{equation}
From (\ref{DS2}), (\ref{DS3}), and solution of (\ref{DS4}), we formally obtain
the solution $U(t)$ of (\ref{DS8}):%
\begin{equation}
U(t)=Y(t)+\frac{\sigma}{2}w(t). \label{DS9}%
\end{equation}
Hence%
\begin{equation}
V(t)=U^{2}(t)=(Y(t)+\frac{\sigma}{2}w(t))^{2}. \label{DS10}%
\end{equation}
\textbf{2.2} Since the Doss-Sussmann results rely on a global Lipschitz
assumption that is not fulfilled for (\ref{In1}), solution (\ref{DS10}) has to
be considered only formally. In this section we therefore give a direct proof
of the following more precise result.

\begin{proposition}
\label{Proposition 1} {Let }$Y(0)=U(0)=\sqrt{V(0)}>0.$ {Let }$\tau$ {be the
following stopping time: }%
\[
\tau:=\inf\{t:V(t)=0\}.
\]
{Then equation (\ref{DS4}) has a unique solution }$Y(t)$ {on the interval
}$[0,\tau),$ {the solution} $U(t)$ {of (\ref{DS8}) is expressed by formula
(\ref{DS9}) on this interval, and} $V(t)$ {is expressed by (\ref{DS10}).}
\end{proposition}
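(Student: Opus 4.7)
The plan is to reduce the random ODE (\ref{DS4}) to a pathwise statement about $U(t)=\sqrt{V(t)}$, exploiting the additive, constant-coefficient diffusion in (\ref{DS8}). For existence, note first that on $[0,\tau)$ one has $V(t)>0$, so $U(t):=\sqrt{V(t)}$ is a strictly positive semimartingale, and It\^o's formula applied to $\sqrt{\cdot}$ in (\ref{In1}) yields exactly the SDE (\ref{DS8}). I would then \emph{define} $Y(t):=U(t)-\tfrac{\sigma}{2}w(t)$ on $[0,\tau)$. Since the Brownian components of $dU$ and $\tfrac{\sigma}{2}\,dw$ cancel, $Y$ is pathwise of finite variation with
\[
dY(t)=\left(\frac{\alpha}{U(t)}-\frac{k}{2}U(t)\right)dt=\left(\frac{\alpha}{Y(t)+\tfrac{\sigma}{2}w(t)}-\frac{k}{2}\bigl(Y(t)+\tfrac{\sigma}{2}w(t)\bigr)\right)dt,
\]
and $Y(0)=U(0)=\sqrt{V(0)}$. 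So $Y$ is $C^{1}$ pathwise and satisfies (\ref{DS4}); formulas (\ref{DS9}) and (\ref{DS10}) then hold by construction.

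For uniqueness, I would argue pathwise. Fix $\omega$; the right-hand side
\[
f(t,y):=\frac{\alpha}{y+\tfrac{\sigma}{2}w(t)}-\frac{k}{2}\bigl(y+\tfrac{\sigma}{2}w(t)\bigr)
\]
is continuous in $t$ and locally Lipschitz in $y$ on the open set $D:=\{(t,y):y+\tfrac{\sigma}{2}w(t)>0\}$, so standard ODE theory yields a unique non-extendible solution starting at $(0,\sqrt{V(0)})\in D$. Conversely, if $\widetilde Y$ is any solution of (\ref{DS4}) on $[0,\tau_{1})$ with $\tau_{1}\le\tau$, then $\widetilde U:=\widetilde Y+\tfrac{\sigma}{2}w$ solves (\ref{DS8}) with $\widetilde U>0$; since the coefficients of (\ref{DS8}) are locally Lipschitz away from $U=0$, strong uniqueness forces $\widetilde U=U$, hence $\widetilde Y=Y$.

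The main obstacle is matching the blow-up time of the pathwise ODE (\ref{DS4}) with the hitting time $\tau$. The bijection $Y\leftrightarrow U=Y+\tfrac{\sigma}{2}w$ above turns the singularity locus $\{Y+\tfrac{\sigma}{2}w=0\}$ of the ODE into the zero set of $U=\sqrt{V}$, which is exactly $\{t=\tau\}$; so the maximal pathwise solution of (\ref{DS4}) is defined precisely on $[0,\tau)$, as claimed.
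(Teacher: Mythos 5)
Your proof is correct and follows essentially the same route as the paper: apply It\^o's formula to $U=\sqrt{V}$ on $[0,\tau)$, define $Y:=U-\tfrac{\sigma}{2}w$, and observe that the martingale parts cancel so that $Y$ is pathwise of finite variation and solves (\ref{DS4}), whence (\ref{DS9}) and (\ref{DS10}) follow. The only (cosmetic) difference is in the uniqueness step, where the paper simply invokes uniqueness of $V$ while you additionally spell out the pathwise local-Lipschitz argument on $\{(t,y):y+\tfrac{\sigma}{2}w(t)>0\}$; both are valid.
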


\begin{proof}
Let $(w(t),\ V(t))$ be the solution of the SDE system
\[
dw=dw(t),\ dV=k(\lambda-V)dt+\sigma\sqrt{V(t)}dw\left(  t\right)  ,
\]
which satisfies the initial conditions $w(0)=0,\ V(0)>0.$ Then $U(t)=\sqrt
{V(t)}>0$ is a solution of (\ref{DS8}) on the interval $[0,\tau).$ Consider
the function $Y(t)=U(t)-\frac{\sigma}{2}w(t),\ 0\leq t<\tau.$ Clearly,
$Y(t)+\frac{\sigma}{2}w(t)>0$ on $[0,\tau).$ Due to Ito's formula, we get%
\[
dY(t)=dU(t)-\frac{\sigma}{2}dw(t)=\frac{\alpha dt}{Y+\frac{\sigma}{2}%
w(t)}-\frac{k}{2}(Y+\frac{\sigma}{2}w(t))dt,
\]
i.e., the function $U(t)-\frac{\sigma}{2}w(t)$ is a solution of (\ref{DS4}).
The uniqueness of $Y(t)$ follows from the uniqueness of $V(t).$
\end{proof}

\medskip

\noindent\textbf{2.3} So far we were starting at the moment $t=0$. It is
useful to consider the Doss-Sussmann transformation with an arbitrary initial
time $t_{0}>0$ (which even may be a stopping time, for example, $0\leq
t_{0}<\tau$). In this case, we obtain instead of (\ref{DS4}) for%
\[
Y=Y(t;t_{0})=U(t)-\frac{\sigma}{2}(w(t)-w(t_{0}))=\sqrt{V(t)}-\frac{\sigma}%
{2}(w(t)-w(t_{0})),\ t_{0}\leq t<t_{0}+\tau,
\]
the equation
\begin{align}
\frac{dY}{dt}  &  =\frac{\alpha}{Y+\frac{\sigma}{2}(w(t)-w(t_{0}))}-\frac
{k}{2}(Y+\frac{\sigma}{2}(w(t)-w(t_{0}))),\ \label{DS15}\\
Y(t_{0};t_{0})  &  =\sqrt{V(t_{0})},\ t_{0}\leq t<t_{0}+\tau,\nonumber
\end{align}
with $\alpha$ given by (\ref{DS8a}). Clearly,%
\begin{equation}
V(t)=(Y(t;t_{0})+\frac{\sigma}{2}(w(t)-w(t_{0})))^{2},\ t_{0}\leq t<t_{0}%
+\tau. \label{DS17}%
\end{equation}

\medskip

\section{Auxiliary propositions}

\textbf{3.1} Let us consider in view of (\ref{DS15}) solutions of the ordinary
differential equations%
\begin{equation}
\frac{dy^{0}}{dt}=\frac{\alpha}{y^{0}}-\frac{k}{2}y^{0},\ y^{0}(t_{0}%
)=y_{0}>0,\ t\geq t_{0}\geq0, \label{OS1}%
\end{equation}
which are given by%
\begin{equation}
y^{0}(t)=y_{t_{0},y_{0}}^{0}(t)=[y_{0}^{2}e^{-k(t-t_{0})}+\frac{2\alpha}%
{k}(1-e^{-k(t-t_{0})})]^{1/2},\ t\geq t_{0}. \label{OS2}%
\end{equation}

In the case $\alpha>0,$ i.e., $4k\lambda>\sigma^{2},$ we have: if $y_{0}%
>\sqrt{2\alpha/k}$ then $y_{t_{0},y_{0}}^{0}(t)\downarrow\sqrt{2\alpha/k}$ as
$t\rightarrow\infty$ and if $0<y_{0}<\sqrt{2\alpha/k}$ then $y_{t_{0},y_{0}%
}^{0}(t)\uparrow\sqrt{2\alpha/k}$ as $t\rightarrow\infty.$ Further
$y^{0}(t)=\sqrt{2\alpha/k}$ is a solution of (\ref{OS1}).

In the case $\alpha=0,$ the solution $y_{t_{0},y_{0}}^{0}(t)\downarrow0$ under
$t\rightarrow\infty$ for any $y_{0}>0.$ We note that the case $\alpha\geq0$ is
more general than the case $2k\lambda\geq\sigma^{2}$ (we recall that in the
latter case $V(t)>0$,$\ t\geq t_{0}$).

In the case $\alpha<0,$ the solution $y_{t_{0},y_{0}}^{0}(t)$ is convexly
decreasing under not too large $y_{0}$. It attains zero at the moment $\bar
{t}$ given by%
\begin{equation}
\bar{t}=t_{0}+\frac{1}{k}\ln\frac{y_{0}^{2}-2\alpha/k}{-2\alpha/k} \label{OS3}%
\end{equation}
and $y_{t_{0},y_{0}}^{0\prime}(\bar{t})=-\infty.$

\medskip In what follows we deal with the case
\begin{equation}
\alpha=\dfrac{4k\lambda-\sigma^{2}}{8}\geq0. \label{OS05}%
\end{equation}
\textbf{3.2}. Our next goal is to obtain estimates for solutions of the
equation
\begin{equation}
\frac{dy}{dt}=\frac{\alpha}{y+\frac{\sigma}{2}\varphi(t)}-\frac{k}{2}%
(y+\frac{\sigma}{2}\varphi(t)),\ y(t_{0})=y_{0},\ t_{0}\leq t\leq t_{0}%
+\theta, \label{OS4}%
\end{equation}
(cf. (\ref{DS15}) ) for a given continuous function $\varphi(t).$\medskip

\begin{lemma}
\label{Lemma 2} {Let} $\alpha\geq0.${ Let }$y^{i}(t),\ i=1,2,${ be two
solutions of (\ref{OS4}) such that }$y^{i}(t)+\frac{\sigma}{2}\varphi(t)>0$
{on} $[t_{0},t_{0}+\theta],$ for some $\theta$ with $0\leq\theta\leq T.$
\textit{Then}%
\begin{equation}
\left\vert y^{2}(t)-y^{1}(t)\right\vert \leq\left\vert y^{2}(t_{0}%
)-y^{1}(t_{0})\right\vert ,\ t_{0}\leq t\leq t_{0}+\theta. \label{OS06}%
\end{equation}

\end{lemma}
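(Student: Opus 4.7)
The plan is to form the difference $z(t):=y^{2}(t)-y^{1}(t)$ and derive a linear ODE for it in which the coefficient has a sign we can control.

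First I would subtract the two instances of (\ref{OS4}) and use the algebraic identity $\frac{1}{a}-\frac{1}{b}=\frac{b-a}{ab}$ on the $\alpha$-term. Writing $\psi^{i}(t):=y^{i}(t)+\frac{\sigma}{2}\varphi(t)$, which by assumption are strictly positive on $[t_{0},t_{0}+\theta]$, this produces
\[
\frac{dz}{dt}=-\left(\frac{\alpha}{\psi^{1}(t)\psi^{2}(t)}+\frac{k}{2}\right)z=:-c(t)\,z,
\]
where $c(t)\geq 0$ on $[t_{0},t_{0}+\theta]$ because $\alpha\geq 0$, $k>0$, and $\psi^{1}\psi^{2}>0$.

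Integrating this scalar linear ODE yields $z(t)=z(t_{0})\exp\!\left(-\int_{t_{0}}^{t}c(s)\,ds\right)$. Taking absolute values and using $c\geq 0$ gives the claimed inequality (\ref{OS06}). Alternatively, one can multiply by $2z$ to get $\frac{d}{dt}z^{2}=-2c(t)z^{2}\leq 0$, so that $z^{2}(t)$ is nonincreasing, which yields the same bound.

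There is no serious obstacle here; the only point requiring care is the justification that $c(t)$ is well defined and integrable on $[t_{0},t_{0}+\theta]$. This is precisely what the standing assumption $\psi^{i}(t)>0$ on the closed interval $[t_{0},t_{0}+\theta]$ provides: both $\psi^{1}$ and $\psi^{2}$ are continuous and strictly positive on a compact interval, hence bounded away from zero, so $c(t)$ is continuous and bounded. The sign condition $\alpha\geq 0$ is crucial: it guarantees that the $\alpha$-term contributes with the \emph{contractive} sign in the ODE for $z$, so the contribution only helps and never enlarges $|z|$.
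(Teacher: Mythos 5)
Your argument is correct and is essentially the paper's own proof: the authors differentiate $(y^{2}(t)-y^{1}(t))^{2}$, apply the same identity $\frac{1}{a}-\frac{1}{b}=\frac{b-a}{ab}$ to the $\alpha$-terms, and observe that the resulting integrand $-\alpha\frac{z^{2}}{\psi^{1}\psi^{2}}-\frac{k}{2}z^{2}$ is nonpositive, which is precisely the ``alternative'' you mention at the end. No gaps; the only cosmetic difference is that you first write the linear ODE for $z$ itself and solve it explicitly, whereas the paper goes directly to the monotonicity of $z^{2}$.
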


\begin{proof}
We have%
\begin{gather}
d(y^{2}(t)-y^{1}(t))^{2}=2(y^{2}(t)-y^{1}(t))\label{val}\\
\times\left(  \frac{\alpha}{y^{2}(t)+\frac{\sigma}{2}\varphi(t)}-\frac{k}%
{2}(y^{2}(t)+\frac{\sigma}{2}\varphi(t))-\frac{\alpha}{y^{1}(t)+\frac{\sigma
}{2}\varphi(t)}+\frac{k}{2}(y^{1}(t)+\frac{\sigma}{2}\varphi(t))\right)
dt.\nonumber
\end{gather}
From here%
\begin{gather*}
(y^{2}(t)-y^{1}(t))^{2}=(y^{2}(t_{0})-y^{1}(t_{0}))^{2}\\
+2\int_{t_{0}}^{t}[-\alpha\frac{(y^{2}(s)-y^{1}(s))^{2}}{(y^{1}(s)+\frac
{\sigma}{2}\varphi(s))(y^{2}(s)+\frac{\sigma}{2}\varphi(s))}-\frac{k}{2}%
(y^{2}(s)-y^{1}(s))^{2}]ds\\
\leq(y^{2}(t_{0})-y^{1}(t_{0}))^{2},
\end{gather*}
whence (\ref{OS06}) follows.
\end{proof}

\begin{remark}
\label{L2*} \ It is known that for $\delta>1$ the Bessel process BES$^{\delta
}$ has the representation%
\[
Z(t)=Z(0)+\frac{\delta-1}{2}\int_{0}^{t}\frac{1}{Z(s)}ds\,+W(t),\text{
\ \ }0\leq t<\infty,
\]
where $W$ is standard Brownian Motion, $Z(t)\geq0$ a.s., and that in
particular $E\int_{0}^{t}\frac{1}{Z(s)}ds<\infty.$ (See \cite{RY}; for
$\delta\leq1$ the representation of BES$^{\delta}$ is less simple and involves
the concept of local time.) From this fact it is not difficult to show that
for $\alpha>0$ the solution of (\ref{DS8}) may be represented as
\[
U(t)=U(t_{0})+\int_{t_{0}}^{t}(\frac{\alpha}{U(s)}-\frac{k}{2}U(s))ds+\frac
{\sigma}{2}\left(  w(t)-w(t_{0})\right)  ,\ U(0)>0,\text{ \ \ }t_{0}\leq
t<\infty.
\]
Thus, with $Y(t)=U(t)-\frac{\sigma}{2}\left(  w(t)-w(t_{0})\right)  ,$ it
holds that%
\[
Y(t)=Y(t_{0})+\int_{t_{0}}^{t}\left(  \frac{\alpha}{Y(s)+\frac{\sigma}%
{2}\left(  w(s)-w(t_{0})\right)  }-\frac{k}{2}\left(  Y(s)+\frac{\sigma}%
{2}\left(  w(s)-w(t_{0})\right)  \right)  \right)  ds,
\]
for $Y(0)=U(0)>0,$ $0\leq t<\infty,$ and that in particular $Y$ is continuous
and of bounded variation. From this it follows that (\ref{val}) holds for $t_0\le t\le t_0+T$ when
$\alpha>0$ and $\varphi(t)=w(t)-w(t_{0})$ is an arbitrary Brownian trajectory, 
and then inequality (\ref{OS06}) in Lemma \ref{Lemma 2} goes through for
$\theta=T.$
\end{remark}
\textbf{3.3} Now consider (\ref{OS4}) for a continuous function $\varphi$
satisfying%
\begin{equation}
\left\vert \varphi(t)\right\vert \leq r,\ t_{0}\leq t\leq t_{0}+\theta\leq
t_{0}+T, \label{OS5}%
\end{equation}
for some $r>0$ and $0\leq\theta\leq T.$ Along with (\ref{OS1}), (\ref{OS4})
with (\ref{OS5}), we further consider the equations%
\begin{align}
\frac{dy}{dt}  &  =\frac{\alpha}{y+\frac{\sigma}{2}r}-\frac{k}{2}%
(y+\frac{\sigma}{2}r),\ y(t_{0})=y_{0},\label{OS6}\\
\frac{dy}{dt}  &  =\frac{\alpha}{y-\frac{\sigma}{2}r}-\frac{k}{2}%
(y-\frac{\sigma}{2}r),\ y(t_{0})=y_{0}. \label{OS7}%
\end{align}
Let us assume that $y_{0}\geq\sigma r>0,$ and consider an $\eta>0,$ to be
specified below, that satisfies
\begin{equation}
y_{0}\geq\eta\geq\sigma r>0. \label{OS8}%
\end{equation}
The solutions of (\ref{OS1}), (\ref{OS4}) with (\ref{OS5}), (\ref{OS6}), and
(\ref{OS7}) are denoted by $y^{0}(t),\ y(t),\ y^{-}(t)$, and $y^{+}(t),$
respectively, where $y^{0}(t)$ is given by (\ref{OS2}). By using (\ref{OS2})
we derive straightforwardly that%
\begin{align}
y^{-}(t)  &  =[(y_{0}+\frac{\sigma}{2}r)^{2}e^{-k(t-t_{0})}+\frac{2\alpha}%
{k}(1-e^{-k(t-t_{0})})]^{1/2}-\frac{\sigma}{2}r,\ t_{0}\leq t\leq t_{0}%
+\theta,\label{OS9}\\
y^{+}(t)  &  =[(y_{0}-\frac{\sigma}{2}r)^{2}e^{-k(t-t_{0})}+\frac{2\alpha}%
{k}(1-e^{-k(t-t_{0})})]^{1/2}+\frac{\sigma}{2}r,\ t_{0}\leq t\leq t_{0}%
+\theta. \label{OS10}%
\end{align}
Note that $y^{-}(t)+\sigma r/2>0$ and $y^{+}(t)>\sigma r/2,$ $t_{0}\leq t\leq
t_{0}+\theta.$ Due to the comparison theorem for ODEs (see, e.g., \cite{H},
Ch. 3), the inequality%
\[
\frac{\alpha}{y+\frac{\sigma}{2}r}-\frac{k}{2}(y+\frac{\sigma}{2}r)\leq
\frac{\alpha}{y+\frac{\sigma}{2}\varphi(t)}-\frac{k}{2}(y+\frac{\sigma}%
{2}\varphi(t))\leq\frac{\alpha}{y-\frac{\sigma}{2}r}-\frac{k}{2}%
(y-\frac{\sigma}{2}r),
\]
which is fulfilled in view of (\ref{OS5}) for $y>\sigma r/2,$ then implies
that%
\begin{equation}
y^{-}(t)\leq y(t)\leq y^{+}(t),\ t_{0}\leq t\leq t_{0}+\theta. \label{OS11}%
\end{equation}
The same inequality holds for $y(t)$ replaced by $y^{0}(t).$ We thus get%
\begin{equation}
\left\vert y(t)-y^{0}(t)\right\vert \leq y^{+}(t)-y^{-}(t),\ t_{0}\leq t\leq
t_{0}+\theta. \label{OS12}%
\end{equation}

\begin{proposition}
\label{Proposition 3} {Let }$\alpha=\dfrac{4k\lambda-\sigma^{2}}{8}\geq0,${
the inequalities (\ref{OS5}) and (\ref{OS8}) be fulfilled for a fixed }%
$\eta>0,$ and {let }$\theta\leq T.$ We then have
\begin{align}
\left\vert y(t)-y^{0}(t)\right\vert  &  \leq Cr(t-t_{0})\leq Cr\theta
,\ t_{0}\leq t\leq t_{0}+\theta,\text{ \ \ with}\label{OS13}\\
C  &  =\dfrac{\sigma k}{2}+\dfrac{4\alpha\sigma}{3\eta^{2}}e^{\frac{k}{2}%
T}.\nonumber
\end{align}
{In particular, }$C$ \textit{is independent of}$\ t_{0},\ y_{0},$ and $r$
(provided {(\ref{OS8}) holds}).
\end{proposition}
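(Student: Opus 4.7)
My plan is to start from inequality (\ref{OS12}), which already reduces the claim to a pathwise bound on $y^{+}(t)-y^{-}(t)$, and then to exploit both the explicit formulas (\ref{OS9})--(\ref{OS10}) and the defining ODEs (\ref{OS6})--(\ref{OS7}). Since $y^{+}(t_{0})=y^{-}(t_{0})=y_{0}$, I would write
$$
y^{+}(t)-y^{-}(t)=\int_{t_{0}}^{t}\bigl[(y^{+})'(s)-(y^{-})'(s)\bigr]\,ds
$$
and dominate the integrand uniformly on $[t_{0},t_{0}+T]$ by a constant multiple of $r$; integration then yields the claimed linear-in-$(t-t_{0})$ bound.

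\textbf{The integrand.} Writing $\sqrt{A(s)}:=y^{-}(s)+\tfrac{\sigma}{2}r$ and $\sqrt{B(s)}:=y^{+}(s)-\tfrac{\sigma}{2}r$ (both strictly positive, as noted after (\ref{OS10})), the ODEs give
$$
(y^{+})'(s)-(y^{-})'(s)=\alpha\Bigl(\tfrac{1}{\sqrt{B(s)}}-\tfrac{1}{\sqrt{A(s)}}\Bigr)+\tfrac{k}{2}\bigl(\sqrt{A(s)}-\sqrt{B(s)}\bigr),
$$
and the explicit expressions (\ref{OS9})--(\ref{OS10}) yield the key identity $A(s)-B(s)=2\sigma r\,y_{0}\,e^{-k(s-t_{0})}>0$. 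Both terms of the integrand are therefore nonnegative and can be bounded separately.

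\textbf{The bounds.} Dropping the nonnegative $2\alpha/k\cdot(1-e^{-k(s-t_{0})})$ summand in $A$ and $B$ gives $\sqrt{A(s)}\geq(y_{0}+\tfrac{\sigma}{2}r)e^{-k(s-t_{0})/2}$ and $\sqrt{B(s)}\geq(y_{0}-\tfrac{\sigma}{2}r)e^{-k(s-t_{0})/2}$, so $\sqrt{A}+\sqrt{B}\geq 2y_{0}e^{-k(s-t_{0})/2}$. Using $\sqrt{A}-\sqrt{B}=(A-B)/(\sqrt{A}+\sqrt{B})$ this implies $\sqrt{A(s)}-\sqrt{B(s)}\leq\sigma r\,e^{-k(s-t_{0})/2}$, which bounds the second summand of the integrand by $\tfrac{\sigma k}{2}r$. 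For the first summand, I would rewrite it as $\alpha(\sqrt{A}-\sqrt{B})/\sqrt{AB}$ and use
$$
\sqrt{A(s)}\,\sqrt{B(s)}\geq\bigl(y_{0}^{2}-\tfrac{\sigma^{2}r^{2}}{4}\bigr)e^{-k(s-t_{0})}\geq\tfrac{3}{4}y_{0}^{2}\,e^{-k(s-t_{0})},
$$
where the second inequality spends the hypothesis $\sigma r\leq y_{0}$ from (\ref{OS8}). Combined with $y_{0}\geq\eta$ and $s-t_{0}\leq T$ this bounds the first summand by $\tfrac{4\alpha\sigma r}{3\eta^{2}}e^{kT/2}$. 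Adding the two bounds and integrating from $t_{0}$ to $t$ reproduces precisely the claimed constant $C$, and (\ref{OS12}) closes the proof.

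\textbf{Main obstacle.} All estimates are elementary; the only step that requires care is the lower bound on $\sqrt{AB}$. The factor $4/3$ in the claimed $C$ forces one to spend the hypothesis $\sigma r\leq y_{0}$ on the algebraic estimate $y_{0}^{2}-\sigma^{2}r^{2}/4\geq\tfrac{3}{4}y_{0}^{2}$ rather than on the more obvious bound $y_{0}-\sigma r/2\geq\eta/2$; the latter still yields a bound of the same qualitative form $\sigma k/2+c\alpha\sigma\eta^{-2}e^{kT/2}$, but with a worse numerical constant $c$.
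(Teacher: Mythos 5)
Your proof is correct and arrives at exactly the constant $C$ of the proposition, but it reaches the bound on $y^{+}-y^{-}$ by a genuinely different route. The paper never returns to the ODEs (\ref{OS6})--(\ref{OS7}): it writes $y^{+}(t)-y^{-}(t)=\sigma r-(z^{+}(t)-z^{-}(t))$ with $z^{\pm}(t)=[(y_{0}\pm\frac{\sigma}{2}r)^{2}e^{-k(t-t_{0})}+\frac{2\alpha}{k}(1-e^{-k(t-t_{0})})]^{1/2}$ (your $\sqrt{A}$ and $\sqrt{B}$), lower-bounds $z^{+}-z^{-}=2y_{0}\sigma re^{-k(t-t_{0})}/(z^{+}+z^{-})$ by upper-bounding $z^{+}+z^{-}$ via $(a^{2}+b)^{1/2}\leq a+b/(2a)$, and only at the end converts the result into something linear in $t-t_{0}$ through $1-e^{-q\vartheta}\leq q\vartheta$. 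You instead represent $y^{+}-y^{-}$ as the integral of $(y^{+})'-(y^{-})'$ and bound the integrand pointwise by $Cr$, using the same key identity $A-B=2y_{0}\sigma re^{-k(s-t_{0})}$ together with the lower bounds $\sqrt{A}+\sqrt{B}\geq 2y_{0}e^{-k(s-t_{0})/2}$ and $\sqrt{A}\sqrt{B}\geq\frac{3}{4}\eta^{2}e^{-k(s-t_{0})}$. Your version makes the linearity in $t-t_{0}$ automatic (it is just integration of a uniformly bounded integrand) and dispenses with the two elementary inequalities the paper needs; the paper's version stays entirely within the closed-form solutions and never has to invoke that $y^{\pm}$ solve the comparison ODEs. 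Both arguments spend hypothesis (\ref{OS8}) in the same place, namely to obtain $y_{0}^{2}-\sigma^{2}r^{2}/4\geq\frac{3}{4}\eta^{2}$, and both then pass through (\ref{OS12}) to conclude.
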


\begin{proof}
We estimate the difference $y^{+}(t)-y^{-}(t).$ It holds%
\begin{gather}
y^{+}(t)=z^{-}(t)+\frac{\sigma}{2}r,\ y^{-}(t)=z^{+}(t)-\frac{\sigma}%
{2}r,\ \nonumber\\
y^{+}(t)-y^{-}(t)=\sigma r-(z^{+}(t)-z^{-}(t)), \label{OS14}%
\end{gather}
where%
\[
z^{\pm}(t)=[(y_{0}\pm\frac{\sigma}{2}r)^{2}e^{-k(t-t_{0})}+\frac{2\alpha}%
{k}(1-e^{-k(t-t_{0})})]^{1/2}.
\]
Further,%
\begin{equation}
z^{+}(t)-z^{-}(t)=\frac{(z^{+}(t))^{2}-(z^{-}(t))^{2}}{z^{+}(t)+z^{-}%
(t)}=\frac{2y_{0}\sigma re^{-k(t-t_{0})}}{z^{+}(t)+z^{-}(t)}. \label{OS15}%
\end{equation}
Using the inequality $(a^{2}+b)^{1/2}\leq a+b/2a$ for any $a>0$ and $b\geq0,$
we get
\begin{align*}
z^{+}(t)  &  \leq(y_{0}+\frac{\sigma}{2}r)e^{-\frac{k}{2}(t-t_{0})}%
+\frac{\alpha}{k}\frac{(1-e^{-k(t-t_{0})})}{(y_{0}+\frac{\sigma}{2}%
r)e^{-\frac{k}{2}(t-t_{0})}},\\
z^{-}(t)  &  \leq(y_{0}-\frac{\sigma}{2}r)e^{-\frac{k}{2}(t-t_{0})}%
+\frac{\alpha}{k}\frac{(1-e^{-k(t-t_{0})})}{(y_{0}-\frac{\sigma}{2}%
r)e^{-\frac{k}{2}(t-t_{0})}},
\end{align*}
whence%
\[
z^{+}(t)+z^{-}(t)\leq2y_{0}e^{-\frac{k}{2}(t-t_{0})}+\frac{\alpha}{k}%
\frac{(1-e^{-k(t-t_{0})})}{e^{-\frac{k}{2}(t-t_{0})}}\frac{2y_{0}}{(y_{0}%
^{2}-\dfrac{\sigma^{2}}{4}r^{2})}.
\]
Therefore%
\[
\frac{1}{z^{+}(t)+z^{-}(t)}\geq\frac{1}{2y_{0}e^{-\frac{k}{2}(t-t_{0})}%
}\left(  1-\frac{\alpha}{k(y_{0}^{2}-\dfrac{\sigma^{2}}{4}r^{2})}%
(e^{k(t-t_{0})}-1)\right)  .
\]
From (\ref{OS15}) we have that%
\[
z^{+}(t)-z^{-}(t)\geq\sigma re^{-\frac{k}{2}(t-t_{0})}\left(  1-\frac{\alpha
}{k(y_{0}^{2}-\dfrac{\sigma^{2}}{4}r^{2})}(e^{k(t-t_{0})}-1)\right)
\]
and so due to (\ref{OS14}) we get%
\[
0\leq y^{+}(t)-y^{-}(t)\leq\sigma r(1-e^{-\frac{k}{2}(t-t_{0})})+\frac
{\alpha\sigma r}{k(y_{0}^{2}-\dfrac{\sigma^{2}}{4}r^{2})}(e^{\frac{k}%
{2}(t-t_{0})}-e^{-\frac{k}{2}(t-t_{0})}).
\]
Since $1-e^{-q\vartheta}\leq q\vartheta$ for any $q\geq0,\ \vartheta\geq0,$
and $y_{0}^{2}-\dfrac{\sigma^{2}}{4}r^{2}\geq\dfrac{3}{4}\eta^{2}$ under
(\ref{OS8}), we obtain
\[
0\leq y^{+}(t)-y^{-}(t)\leq\frac{\sigma rk}{2}(t-t_{0})+\frac{4\alpha\sigma
r}{3k\eta^{2}}e^{\frac{k}{2}(t-t_{0})}k(t-t_{0}).
\]
From this and (\ref{OS12}), (\ref{OS13}) follows with $C=\dfrac{\sigma k}%
{2}+\dfrac{4\alpha\sigma}{3\eta^{2}}e^{\frac{k}{2}T}.$
\end{proof}

\begin{corollary}
\label{corr} Under the assumptions of Proposition~\ref{Proposition 3}, we get
by taking $\eta=y_{0},$%
\begin{align*}
\left\vert y(t)-y^{0}(t)\right\vert  &  \leq\left(  \dfrac{\sigma k}{2}%
+\dfrac{4\alpha\sigma}{3y_{0}^{2}}e^{\frac{k}{2}T}\right)  r\theta,\\
&  :=\left(  D_{1}+\frac{D_{2}}{y_{0}^{2}}\right)  r\theta,\text{ \ \ }%
\ t_{0}\leq t\leq t_{0}+\theta,
\end{align*}
where $D_{1}$ and $D_{2}$ only depend on the parameters of the CIR process
under consideration and the time horizon~$T.$
\end{corollary}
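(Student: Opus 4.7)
The plan is to observe that the corollary is essentially a direct specialization of Proposition~\ref{Proposition 3} obtained by making the specific choice $\eta=y_{0}$ in the constant $C$. So I would not start a new estimation; instead I would just verify that this choice of $\eta$ is admissible under the hypothesis (\ref{OS8}).

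First, I would recall that Proposition~\ref{Proposition 3} requires $\eta$ to satisfy $y_{0}\geq\eta\geq\sigma r>0$. The left inequality becomes an equality when $\eta=y_{0}$, and the right inequality $y_{0}\geq\sigma r>0$ is already built into the standing assumption of Section~3.3 on $y_{0}$ (it is the condition imposed immediately before (\ref{OS8})). Hence taking $\eta=y_{0}$ is permitted, and the bound (\ref{OS13}) applies with the constant
\[
C=\dfrac{\sigma k}{2}+\dfrac{4\alpha\sigma}{3y_{0}^{2}}e^{\frac{k}{2}T}.
\]

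Next I would substitute this expression for $C$ into (\ref{OS13}), which immediately yields
\[
\left\vert y(t)-y^{0}(t)\right\vert \leq\left(\dfrac{\sigma k}{2}+\dfrac{4\alpha\sigma}{3y_{0}^{2}}e^{\frac{k}{2}T}\right)r\theta,\qquad t_{0}\leq t\leq t_{0}+\theta.
\]
Finally, I would set $D_{1}:=\sigma k/2$ and $D_{2}:=(4\alpha\sigma/3)e^{kT/2}$ and observe that these depend only on $k,\lambda,\sigma$ (through $\alpha=(4k\lambda-\sigma^{2})/8$) and on the horizon $T$, which is exactly the claim of the corollary.

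There is really no obstacle here; the only point to check is the admissibility of $\eta=y_{0}$, and the ``hard'' content is entirely carried by Proposition~\ref{Proposition 3}. The corollary's value lies not in its proof but in isolating the dependence of the error constant on the initial value $y_{0}$, which will be important later when $y_{0}$ is allowed to be small and one needs to keep track of the factor $1/y_{0}^{2}$ separately from the universal constants $D_{1},D_{2}$.
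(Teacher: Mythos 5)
Your proposal is correct and coincides with what the paper intends: the corollary is the immediate specialization of Proposition~\ref{Proposition 3} to $\eta=y_{0}$, whose admissibility follows from the standing assumption $y_{0}\geq\sigma r>0$ preceding (\ref{OS8}). The identification $D_{1}=\sigma k/2$ and $D_{2}=(4\alpha\sigma/3)e^{kT/2}$ is exactly as the paper uses it.
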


\section{One-step approximation}

Let us suppose that for $t_{m},\ t_{0}\leq t_{m}<t_{0}+T,$ $V(t_{m})$ is known
exactly. In fact, $t_{m}$ may be considered as a realization of a certain
stopping time. Consider $Y=Y(t;t_{m})$ on some interval $[t_{m},t_{m}%
+\theta_{m}]$ with $y_{m}:=Y(t_{m};t_{m})=\sqrt{V(t_{m})},$ given by the ODE
(cf. (\ref{DS15})),%
\begin{align}
\frac{dY}{dt}  &  =\frac{\alpha}{Y+\frac{\sigma}{2}(w(t)-w(t_{m}))}-\frac
{k}{2}(Y+\frac{\sigma}{2}(w(t)-w(t_{m}))),\ \label{OS19}\\
Y(t_{m};t_{m})  &  =\sqrt{V(t_{m})},\ t_{m}\leq t\leq t_{m}+\theta
_{m}.\nonumber
\end{align}
Assume that%
\begin{equation}
y_{m}=\sqrt{V(t_{m})}\geq\sigma r. \label{OS190}%
\end{equation}
Due to (\ref{DS17}), the solution $V(t)$ of (\ref{In1}) on $[t_{m}%
,t_{m}+\theta_{m}]$ is obtained via%
\begin{equation}
\sqrt{V(t)}=Y(t;t_{m})+\frac{\sigma}{2}(w(t)-w(t_{m})),\ t_{m}\leq t\leq
t_{m}+\theta_{m}. \label{OS019}%
\end{equation}
Though equation (\ref{OS19}) is (just) an ODE, it is not easy to solve it
numerically in a straightforward way because of the non-smoothness of $w(t).$
We are here going to construct an approximation $y^{m}(t)$ of $Y(t;t_{m})$ via
Proposition~\ref{Proposition 3}. To this end we simulate the point
$(t_{m}+\theta_{m},w(t_{m}+\theta_{m})-w(t_{m}))$ by simulating $\theta_{m}$
as being the first-passage (stopping) time of the Wiener process
$w(t)-w(t_{m}),$ $t\geq t_{m},$ to the boundary of the interval $[-r,r].$ So,
$\left\vert w(t)-w(t_{m})\right\vert \leq r$ for $t_{m}\leq t\leq t_{m}%
+\theta_{m}$ and, moreover, the random variable $w(t_{m}+\theta_{m}%
)-w(t_{m}),$ which equals either $-r$ or $+r$ with probability $1/2,$ is
independent of the stopping time $\theta_{m}.$ A method for simulating the
stopping time $\theta_{m}$ is given in Subsection~\ref{sim} below. Proposition
\ref{Proposition 3} and Corollary \ref{corr} then yield,%
\begin{align}
\left\vert Y(t;t_{m})-y^{m}(t)\right\vert  &  \leq\left(  D_{1}+\frac{D_{2}%
}{y_{m}^{2}}\right)  r\left(  t_{m+1}-t_{m}\right)  ,\text{ \ \ }t_{m}\leq
t\leq t_{m+1}\text{ \ \ with}\label{OS16}\\
t_{m+1}  &  :=\min(t_{m}+\theta_{m},t_{0}+T),\nonumber
\end{align}
where $y^{m}(t)$ is the solution of the problem
\[
\frac{dy^{m}}{dt}=\frac{\alpha}{y^{m}}-\frac{k}{2}y^{m},\ y^{m}(t_{m}%
)=Y(t_{m};t_{m})=\sqrt{V(t_{m})}%
\]
that is given by (\ref{OS2}) with $(t_{m},y_{m})=(t_{m},\sqrt{V(t_{m})}).$ We
so have,
\[
\sqrt{V(t)}=Y(t;t_{m})+\frac{\sigma}{2}(w(t)-w(t_{m}))=y^{m}(t)+\frac{\sigma
}{2}(w(t)-w(t_{m}))+\rho^{m}(t),
\]
where due to (\ref{OS16}),%
\begin{equation}
\left\vert \rho^{m}(t)\right\vert \leq\left(  D_{1}+\frac{D_{2}}{y_{m}^{2}%
}\right)  r\left(  t_{m+1}-t_{m}\right)  ,\text{ \ \ }t_{m}\leq t\leq t_{m+1}.
\label{OS017}%
\end{equation}
We next introduce the one-step approximation $\sqrt{\overline{V}(t)}$ of
$\sqrt{V(t)}$ on $[t_{m},t_{m+1}]$ by%
\begin{equation}
\sqrt{\overline{V}(t)}:=y^{m}(t)+\frac{\sigma}{2}(w(t)-w(t_{m})),\text{
\ \ }t_{m}\leq t\leq t_{m+1}. \label{OS17}%
\end{equation}
Since $\left\vert w(t_{m+1})-w(t_{m})\right\vert =r$ if $t_{m+1}=t_{m}%
+\theta_{m}<t_{0}+T,$ and $\left\vert w(t_{m+1})-w(t_{m})\right\vert \leq r$
if $t_{m+1}=t_{0}+T,$ the one-step approximation (\ref{OS17}) for $t=t_{m+1}$
is given by%
\begin{gather}
\sqrt{\overline{V}(t_{m+1})}:=y^{m}(t_{m+1})+\frac{\sigma}{2}(w(t_{m+1}%
)-w(t_{m}))=\label{OS180}\\
y^{m}(t_{m+1})+\frac{\sigma}{2}\cdot\left\{
\begin{tabular}
[c]{l}%
$r\xi_{m}$ \ \ with $P(\xi_{m}=\pm1)=1/2,$ if $t_{m+1}=t_{m}+\theta_{m}%
<t_{0}+T,$\\
$\zeta_{m}$ \ \ \ if $t_{m+1}=t_{0}+T,$%
\end{tabular}
\ \ \ \ \right. \nonumber
\end{gather}
with $\zeta_{m}=w(t_{0}+T)-w(t_{m})$ being drawn from the distribution of%
\begin{equation}
W_{t_{0}+T-t_{m}}\text{ \ \ conditional on \ \ }\max_{0\leq s\leq
t_{0}+T-t_{m}}\,\left\vert W_{s}\right\vert \leq r, \label{con}%
\end{equation}
where $W$ is an independent standard Brownian motion. For details see
Subsection~\ref{sim} below. We so have the following theorem.

\begin{theorem}
\label{Theorem 4} For the one-step approximation $\overline{V}(t_{m+1})$ due
to the exact starting value $\overline{V}(t_{m})$ $=$ $V(t_{m})$ $=$
$y_{m}^{2},$ we have the one step error%
\begin{equation}
\left\vert \sqrt{V(t_{m+1})}-\sqrt{\overline{V}(t_{m+1})}\right\vert
\leq\left(  D_{1}+\frac{D_{2}}{V(t_{m})}\right)  r\left(  t_{m+1}%
-t_{m}\right)  . \label{OS018}%
\end{equation}

\end{theorem}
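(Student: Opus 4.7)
The plan is to recognize that this is essentially a direct corollary of the estimates developed in Section~3, once one checks the hypotheses. The key observation is that by construction the two approximations $\sqrt{V(t)}$ and $\sqrt{\overline{V}(t)}$ share the same additive Brownian part $\frac{\sigma}{2}(w(t)-w(t_{m}))$, so the error collapses to the ODE difference $Y(t;t_{m})-y^{m}(t)$ already controlled by Proposition~\ref{Proposition 3} and Corollary~\ref{corr}.

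First I would verify that $|w(t)-w(t_{m})|\le r$ on the whole interval $[t_{m},t_{m+1}]$. In the case $t_{m+1}=t_{m}+\theta_{m}<t_{0}+T$ this is immediate from the definition of $\theta_{m}$ as the first-passage time of $w(\cdot)-w(t_{m})$ to $\{-r,r\}$; in the truncated case $t_{m+1}=t_{0}+T$ it follows from the conditioning in \eqref{con} on $\max_{0\le s\le t_{0}+T-t_{m}}|W_{s}|\le r$. Hence with $\varphi(t):=w(t)-w(t_{m})$ the bound \eqref{OS5} holds with $\theta=t_{m+1}-t_{m}\le T$. The assumption $y_{m}=\sqrt{V(t_{m})}\ge\sigma r$ from \eqref{OS190} then allows us to take $\eta=y_{m}$, so that \eqref{OS8} is satisfied as well.

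With these hypotheses in place, equation \eqref{OS19} for $Y(\cdot;t_{m})$ is precisely \eqref{OS4} with $y_{0}=y_{m}$, while $y^{m}(t)$ is the solution of the reference equation \eqref{OS1} with the same initial data at $t_{m}$. Corollary~\ref{corr} therefore yields
\[
|Y(t;t_{m})-y^{m}(t)|\le\left(D_{1}+\frac{D_{2}}{y_{m}^{2}}\right)r(t_{m+1}-t_{m}),\qquad t_{m}\le t\le t_{m+1}.
\]
Combining \eqref{OS019} and \eqref{OS17} we obtain the crucial cancellation
\[
\sqrt{V(t)}-\sqrt{\overline{V}(t)}=Y(t;t_{m})-y^{m}(t),
\]
since the $\frac{\sigma}{2}(w(t)-w(t_{m}))$ terms agree exactly (this is the whole point of using the same Brownian increment in both the DS representation and its approximation).

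Evaluating at $t=t_{m+1}$ and using $y_{m}^{2}=V(t_{m})$ yields \eqref{OS018}. The only non-mechanical step is the verification that the conditionally sampled endpoint in \eqref{OS180} still guarantees $|w(t)-w(t_{m})|\le r$ over the entire interval and not merely at the endpoint; I expect this is where one might stumble if the simulation of $\zeta_{m}$ were interpreted only as an endpoint draw rather than a draw of a constrained trajectory, but the conditioning in \eqref{con} makes this explicit.
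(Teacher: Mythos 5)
Your proof is correct and follows essentially the same route as the paper: the theorem is obtained exactly by applying Proposition~\ref{Proposition 3} and Corollary~\ref{corr} with $\varphi(t)=w(t)-w(t_{m})$, $\eta=y_{0}=y_{m}$, and $\theta=t_{m+1}-t_{m}$, after which the Brownian increments in \eqref{OS019} and \eqref{OS17} cancel, leaving the ODE error $Y(t;t_{m})-y^{m}(t)$. Your remark about the conditioning in \eqref{con} guaranteeing $|w(t)-w(t_{m})|\le r$ on the whole truncated interval is exactly the point the paper relies on as well.
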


\subsection{Simulation of $\theta_{m}$ and $\zeta_{m}$}

\label{sim}

For simulating $\theta_{m}$ we utilize the distribution function%
\[
\mathcal{P}(t):=P(\tau<t),
\]
where $\tau$ is the first-passage time of the Wiener process $W(t)$ to the
boundary of the interval $[-1,1].$ A very accurate approximation
$\mathcal{\tilde{P}}(t)$ of $\mathcal{P}(t)$ is the following one:%
\[
\mathcal{P}(t)\simeq\mathcal{\tilde{P}}(t)=\int_{0}^{t}\mathcal{\tilde{P}%
}^{\prime}(s)ds\text{ \ \ with}%
\]%
\[
\mathcal{\tilde{P}}^{\prime}(t)=\left\{
\begin{array}
[c]{c}%
\dfrac{2}{\sqrt{2\pi t^{3}}}(e^{-\dfrac{1}{2t}}-3e^{-\dfrac{9}{2t}%
}+5e^{-\dfrac{25}{2t}}),\ 0<t\leq\dfrac{2}{\pi},\\
\dfrac{\pi}{2}(e^{-\dfrac{\pi^{2}t}{8}}-3e^{-\dfrac{9\pi^{2}t}{8}}%
+5e^{-\dfrac{25\pi^{2}t}{8}}),\ t>\dfrac{2}{\pi},
\end{array}
\right.
\]
and it holds%
\[
\sup_{t\geq0}\left\vert \mathcal{\tilde{P}}^{\prime}(t)-\mathcal{P}^{\prime
}(t)\right\vert \leq2.13\times10^{-16},\text{ \ and \ }\sup_{t\geq0}\left\vert
\mathcal{\tilde{P}}(t)-\mathcal{P}(t)\right\vert \leq7.04\times10^{-18},
\]
(see for details \cite{MT1}, Ch. 5, Sect. 3 and Appendix A3 ). Now simulate a
random variable $U$ uniformly distributed on $[0,1],$ Then compute
$\tau=\mathcal{P}^{-1}(U)$ which is distributed according to $\mathcal{P}.$
That is, we have to solve the equation $\mathcal{\tilde{P}}(\tau)=U,$ for
instance by Newton's method or any other efficient solving routine. Next set
$\theta_{m}=r^{2}\tau_{m}.$

For simulating $\zeta_{m}$ in (\ref{OS180}) we observe that (\ref{con}) is
equivalent with%
\[
rW_{r^{-2}\left(  t_{0}+T-t_{m}\right)  }\text{ \ \ conditional on \ \ }%
\max_{0\leq u\leq r^{-2}\left(  t_{0}+T-t_{m}\right)  }\,\left\vert
W_{u}\right\vert \leq1.
\]
We next sample $\vartheta$ from the distribution function $\mathcal{Q}%
(x;r^{-2}\left(  t_{0}+T-t_{m}\right)  ),$ where $\mathcal{Q}(x;t)$ is the
known conditional distribution function (see \cite{MT1}, Ch. 5, Sect. 3)%
\begin{equation}
\mathcal{Q}(x;t):=P(\left.  W(t)<x\text{ }\right\vert \text{ max}_{0\leq s\leq
t}\left\vert W(s)\right\vert <1),\text{ \ \ }-1\leq x\leq1, \label{frakQ}%
\end{equation}
and set $\zeta_{m}=r\vartheta.$ The simulation of the last step looks rather
complicated and may be computationally expensive. However it is possible to
take for $w(t_{0}+T)-w(t_{\nu})$ simply any value between $-r$ and $r,$ e.g.
zero. This may enlarge the one-step error on the last step but does not
influence the convergence order of the elaborated method. Indeed, if we set
$w(t_{0}+T)-w(t_{\nu})$ to be zero, for instance, on the last step, we get
$\sqrt{\overline{V}(t_{0}+T)}=y^{\nu}(t_{0}+T)$ instead of (\ref{OS180}), and
\begin{equation}
\left\vert \sqrt{V(t_{0}+T)}-\sqrt{\overline{V}(t_{0}+T)}\right\vert \leq
r\sum_{m=0}^{\nu}\left(  D_{1}+\frac{D_{2}}{\overline{V}(t_{m})}\right)
\left(  t_{m+1}-t_{m}\right)  +\sigma r, \label{mod}%
\end{equation}

\begin{remark}
We have in any step $E\theta_{n}=r^{2},$ the random number of steps before
reaching $t_{0}+T,$ say $\nu+1,$ is finite with probability one, and
$E\nu=O(1/r^{2}).$ For details see \cite{MT1}, Ch. 5, Lemma 1.5. In a
heuristic sense this means that, if we have convergence of order $O(r),$ we
obtain accuracy $O(\sqrt{h}),$ for an (expected) number of steps $O(1/h)$
similar to the standard Euler scheme.
\end{remark}

\section{Convergence theorem}

In this section we develop a scheme that generates approximations
$\sqrt{\overline{V}(t_{0})}=\sqrt{V(t_{0})},$ $\sqrt{\overline{V}(t_{1})},$
$...$ $,\sqrt{\overline{V}(t_{n+1})},$ where $n=0,1,2,...,$ and $t_{1}%
,...,t_{n+1}$ are realizations of a sequence of stopping times, and show that
the global error in approximation $\sqrt{\overline{V}(t_{n+1})}$ is in fact an
aggregated sum of local errors, i.e.,
\[
r\sum_{m=0}^{n}\left(  D_{1}+\frac{D_{2}}{\overline{V}(t_{m})}\right)  \left(
t_{m+1}-t_{m}\right)  \leq rT\left(  D_{1}+\frac{D_{2}}{\eta_{n}^{2}}\right)
,
\]
with $y_{m}=$\ $\sqrt{\overline{V}(t_{m})},$ provided that $y_{m}\geq\sigma r$
for $m=0,...,n,$ and so $\eta_{n}:=\min_{0\leq m\leq n}y_{m}\geq\sigma r.$

Let us now describe an algorithm for the solution of (\ref{In1}) on the
interval $[t_{0},t_{0}+T]$ in the case $\alpha\geq0.$ Suppose we are given
$V(t_{0})$ and $r$ such that%
\[
\sqrt{V(t_{0})}\geq\sigma r.
\]
For the initial step we use the one-step approximation according to the
previous section and thus obtain (see (\ref{OS180}) and (\ref{OS018}))%
\begin{align*}
\sqrt{\overline{V}(t_{1})}  &  =y^{0}(t_{1})+\frac{\sigma}{2}(w(t_{1}%
)-w(t_{0})),\\
\sqrt{V(t_{1})}  &  =\sqrt{\overline{V}(t_{1})}+\rho^{0}(t_{1}),
\end{align*}
where
\begin{equation}
\left\vert \rho^{0}(t_{1})\right\vert \leq\left(  D_{1}+\frac{D_{2}}{V(t_{0}%
)}\right)  r\left(  t_{1}-t_{0}\right)  =:C_{0}r(t_{1}-t_{0}). \label{ins}%
\end{equation}
Suppose that
\[
\sqrt{\overline{V}(t_{1})}\geq\sigma r.
\]
We then go to the next step and consider the expression
\begin{equation}
\sqrt{V(t)}=Y(t;t_{1})+\frac{\sigma}{2}(w(t)-w(t_{1})), \label{CT2}%
\end{equation}
where $Y(t;t_{1})$ is the solution of the problem (see (\ref{OS19}))%
\begin{align}
\frac{dY}{dt}  &  =\frac{\alpha}{Y+\frac{\sigma}{2}(w(t)-w(t_{1}))}-\frac
{k}{2}(Y+\frac{\sigma}{2}(w(t)-w(t_{1}))),\ \label{CT3}\\
Y(t_{1};t_{1})  &  =\sqrt{V(t_{1})},\ t_{1}\leq t\leq t_{1}+\theta
_{1}.\nonumber
\end{align}
Now, in contrast to the initial step, the value $\sqrt{V(t_{1})}$ is unknown
and we are forced to use $\sqrt{\overline{V}(t_{1})}$ instead. Therefore we
introduce $\overline{Y}(t;t_{1})$ as the solution of the equation (\ref{CT3})
with initial value $\overline{Y}(t_{1};t_{1})=\sqrt{\overline{V}(t_{1})}.$
From the previous step we have that $\left\vert Y(t_{1};t_{1})-\overline
{Y}(t_{1};t_{1})\right\vert =\left\vert \sqrt{V(t_{1})}-\sqrt{\overline
{V}(t_{1})}\right\vert =$ $\left\vert \rho^{0}(t_{1})\right\vert \leq
C_{0}r(t_{1}-t_{0}).$ Hence, due to Lemma~\ref{Lemma 2},
\begin{equation}
\left\vert Y(t;t_{1})-\overline{Y}(t;t_{1})\right\vert \leq\rho^{0}(t_{1})\leq
C_{0}r(t_{1}-t_{0}),\ t_{1}\leq t\leq t_{1}+\theta_{1}. \label{OS18}%
\end{equation}
Let $\theta_{1}$ be the first-passage time of the Wiener process
$w(t_{1}+\cdot)-w(t_{1})$ to the boundary of the interval $[-r,r].$ If
$t_{1}+\theta_{1}<t_{0}+T$ then set $t_{2}:=t_{1}+\theta_{1},$ else set
$t_{2}:=t_{0}+T.$ In order to approximate $\overline{Y}(t;t_{1})$ for
$t_{1}\leq t\leq t_{2}$ let us consider along with equation (\ref{CT3}) the
equation%
\[
\frac{dy^{1}}{dt}=\frac{\alpha}{y^{1}}-\frac{k}{2}y^{1},\ y^{1}(t_{1}%
)=\overline{Y}(t_{1};t_{1})=\sqrt{\overline{V}(t_{1})}.
\]
Due to Proposition 3 and Corollary \ref{corr} it holds that%
\begin{equation}
\left\vert \overline{Y}(t;t_{1})-y^{1}(t)\right\vert \leq\left(  D_{1}%
+\frac{D_{2}}{\overline{V}(t_{1})}\right)  r\left(  t_{2}-t_{1}\right)
=:C_{1}r\left(  t_{2}-t_{1}\right)  ,\ \ t_{1}\leq t\leq t_{2}. \label{R1}%
\end{equation}
and so by (\ref{OS18}) we have%
\begin{equation}
\left\vert Y(t;t_{1})-y^{1}(t)\right\vert \leq r(C_{0}(t_{1}-t_{0}%
)+C_{1}\left(  t_{2}-t_{1}\right)  ),\ \ \ t_{1}\leq t\leq t_{2}. \label{R2}%
\end{equation}
We also have (see (\ref{CT2}))%
\begin{equation}
\sqrt{V(t)}=Y(t;t_{1})+\frac{\sigma}{2}(w(t)-w(t_{1}))=y^{1}(t)+\frac{\sigma
}{2}(w(t)-w(t_{1}))+R^{1}(t), \label{R3}%
\end{equation}
where%
\begin{equation}
\left\vert R^{1}(t)\right\vert \leq r(C_{0}(t_{1}-t_{0})+C_{1}\left(
t_{2}-t_{1}\right)  ),\ t_{1}\leq t\leq t_{2}. \label{R4}%
\end{equation}
We so define the approximation%
\begin{align}
\sqrt{\overline{V}(t)}  &  :=y^{1}(t)+\frac{\sigma}{2}(w(t)-w(t_{1})),\text{
\ \ that satisfies}\label{R5}\\
\sqrt{V(t)}  &  =\sqrt{\overline{V}(t)}+R^{1}(t)\ ,\text{ \ \ }t_{1}\leq t\leq
t_{2}.\text{ \ } \label{R6}%
\end{align}
and then set%
\begin{gather}
\sqrt{\overline{V}(t_{2})}=y^{1}(t_{2})+\frac{\sigma}{2}(w(t_{2}%
)-w(t_{1}))=\label{R7}\\
y^{1}(t_{2})+\frac{\sigma}{2}\cdot\left\{
\begin{tabular}
[c]{l}%
\ $r\xi_{1}$ \ \ with $P(\xi_{1}=\pm1)=1/2,$ if $t_{2}=t_{1}+\theta_{1}%
<t_{0}+T,$\\
$\zeta_{1}$ \ \ \ if $t_{2}=t_{0}+T,$%
\end{tabular}
\ \ \ \ \right.  ,\nonumber
\end{gather}
cf. (\ref{OS180}) and (\ref{con}). We thus end up with a next approximation
$\sqrt{\overline{V}(t_{2})}$ such that%
\begin{equation}
\left\vert \sqrt{V(t_{2})}-\sqrt{\overline{V}(t_{2})}\right\vert =\left\vert
R^{1}(t_{2})\right\vert \leq r(C_{0}(t_{1}-t_{0})+C_{1}\left(  t_{2}%
-t_{1}\right)  ). \label{R8}%
\end{equation}

\bigskip From the above description it is obvious how to proceed analogously
given a generic approximation sequence of approximations $\sqrt{\overline
{V}(t_{m})},$ $m=0,1,2,...,n,$ with $\overline{V}(t_{0})=V(t_{0}),$ that
satisfies by assumption%
\begin{align}
\sqrt{\overline{V}(t_{m})}  &  \geq\sigma r,\text{ \ \ for }m=0,...,n,\text{
\ \ and}\label{CT1}\\
\text{ \ \ }\left\vert \sqrt{V(t_{n})}-\sqrt{\overline{V}(t_{n})}\right\vert
&  \leq r\sum_{m=0}^{n-1}\left(  D_{1}+\frac{D_{2}}{\overline{V}(t_{m}%
)}\right)  \left(  t_{m+1}-t_{m}\right) \label{CT02}\\
&  =:r\sum_{m=0}^{n-1}C_{m}\left(  t_{m+1}-t_{m}\right)  .\nonumber
\end{align}
Indeed, consider the expression
\[
\sqrt{V(t)}=Y(t;t_{n})+\frac{\sigma}{2}(w(t)-w(t_{n})),
\]
where $Y(t;t_{n})$ is the solution of the problem
\begin{align}
\frac{dY}{dt}  &  =\frac{\alpha}{Y+\frac{\sigma}{2}(w(t)-w(t_{n}))}-\frac
{k}{2}(Y+\frac{\sigma}{2}(w(t)-w(t_{n}))),\ \label{CT03}\\
Y(t_{n};t_{n})  &  =\sqrt{V(t_{n})},\ t_{n}\leq t\leq t_{n}+\theta
_{n},\nonumber
\end{align}
for a $\theta_{n}>0$ to be determined$.$ Since $\sqrt{V(t_{n})}$ is unknown we
consider $\overline{Y}(t;t_{n})$ as the solution of the equation (\ref{CT03})
with initial value $\overline{Y}(t_{n};t_{n})=\sqrt{\overline{V}(t_{n})}.$ Due
to (\ref{CT02}) and Lemma 2 again, we have%
\[
\left\vert Y(t;t_{n})-\overline{Y}(t;t_{n})\right\vert \leq r\sum_{m=0}%
^{n-1}C_{m}\left(  t_{m+1}-t_{m}\right)  ,\ t_{n}\leq t\leq t_{n}+\theta_{n}.
\]
In order to approximate $\overline{Y}(t;t_{n})$ for $t_{n}\leq t\leq
t_{n}+\theta_{n},$ we consider the equation%
\begin{equation}
\frac{dy^{n}}{dt}=\frac{\alpha}{y^{n}}-\frac{k}{2}y^{n},\ y^{n}(t_{n}%
)=\overline{Y}(t_{n};t_{n})=\sqrt{\overline{V}(t_{n})}. \label{R09}%
\end{equation}
By repeating the procedure (\ref{R1})-(\ref{R8}) we arrive at
\begin{equation}
\sqrt{\overline{V}(t)}:=y^{n}(t)+\frac{\sigma}{2}(w(t)-w(t_{n})),\ t_{n}\leq
t\leq t_{n+1}, \label{R9}%
\end{equation}
satisfying%
\begin{equation}
\left\vert \sqrt{V(t)}-\sqrt{\overline{V}(t)}\right\vert =\left\vert
R^{n}(t)\right\vert \leq r\sum_{m=0}^{n}\left(  D_{1}+\frac{D_{2}}%
{\overline{V}(t_{m})}\right)  \left(  t_{m+1}-t_{m}\right)  ,\text{ \ \ }%
t_{n}\leq t\leq t_{n+1}, \label{R10}%
\end{equation}
with%
\begin{align}
R^{n}(t)  &  :=Y(t;t_{n})-y^{n}(t),\text{ \ \ }t_{n}\leq t\leq t_{n+1}%
,\text{\ and in particular}\label{R11}\\
\left\vert \sqrt{V(t_{n+1})}-\sqrt{\overline{V}(t_{n+1})}\right\vert  &  \leq
r\sum_{m=0}^{n}\left(  D_{1}+\frac{D_{2}}{\overline{V}(t_{m})}\right)  \left(
t_{m+1}-t_{m}\right)  .\nonumber
\end{align}

\begin{remark}
In principle it is possible to use the distribution function $\mathcal{Q}%
$\ (see (\ref{frakQ})) for constructing $\sqrt{\overline{V}(t)}$ for
$t_{n}<t<t_{n+1}.$ However, we rather consider for $t_{n}\leq t\leq t_{n+1}$
the approximation%
\[
\sqrt{\widetilde{V}(t)}:=y^{n}(t)+\frac{\sigma}{2}\widetilde{w}_{n}%
(t),\ t_{n}\leq t\leq t_{n+1},
\]
where (a) for $t_{n+1}<t_{0}+T,$ $\widetilde{w}$ is an arbitrary continuous
function satisfying%
\[
\widetilde{w}(t_{n})=0,\text{ \ }\widetilde{w}(t_{n+1})=w(t_{n+1}%
)-w(t_{n})=r\xi_{n},\text{ \ \ }\max_{t_{n}\leq t\leq t_{n+1}}\text{\ }%
\left\vert \widetilde{w}_{n}(t)\right\vert \leq r,
\]
and (b) for $t_{n+1}=t_{0}+T,$ one may take $\widetilde{w}(t)\equiv0.$ As a
result we get similar to (\ref{mod}) an insignificant increase of the error,%
\[
\left\vert \sqrt{V(t)}-\sqrt{\widetilde{V}(t)}\right\vert \leq r\sum_{m=0}%
^{n}\left(  D_{1}+\frac{D_{2}}{\overline{V}(t_{m})}\right)  \left(
t_{m+1}-t_{m}\right)  +\sigma r,\ t_{n}<t<t_{n+1}.
\]

\end{remark}

Let us consolidate the above procedure in a concise way.

\subsection{Simulation algorithm}

\label{simal}

\begin{itemize}
\item \textit{Set }$\sqrt{\overline{V}(t_{0})}=\sqrt{V(t_{0})}.$\newline

\item \textit{Let the point }$(t_{n},\sqrt{\overline{V}(t_{n})})$\textit{ be
known for an }$n\geq0.$\textit{ Simulate independent random variables }%
$\xi_{n}$\textit{ with }$P(\xi_{n}=\pm1)=1/2,\ $and $\theta_{n}$ as described
in subsection \ref{sim}.\textit{ If }$t_{n}+\theta_{n}<t_{0}+T,$ set
$t_{n+1}=t_{n}+\theta_{n},$ else set $t_{n+1}=t_{0}+T.$\newline

\item \textit{Solve equation (\ref{R09}) on the interval }$[t_{n},t_{n+1}]$
with solution $y^{n}$ and set%
\[
\sqrt{\overline{V}(t_{n+1})}=y^{n}(t_{n+1})+\frac{\sigma}{2}\cdot\left\{
\begin{tabular}
[c]{l}%
$r\xi_{n}$ \ \ if $t_{n+1}<t_{0}+T,$\\
$0$ \ \ \ if $t_{n+1}=t_{0}+T.$%
\end{tabular}
\ \ \ \ \ \right.
\]

\end{itemize}

\bigskip

So, under the assumption (\ref{CT1}) we obtain the estimate (\ref{R10})
(possibly enlarged with a term $\sigma r$). The next theorem shows that if a
trajectory of $V(t)$ under consideration is positive on $[t_{0},t_{0}+T],$
then the algorithm is convergent on this trajectory. We recall that in the
case $2k\lambda\geq\sigma^{2}$ almost all trajectories are positive, hence in
this case {the proposed method is almost surely convergent.}

\medskip

\begin{theorem}
\label{Theorem 8} {Let }$4k\lambda\geq\sigma^{2}${ (i.e., }$\alpha\geq0${).
Then for any positive trajectory }$V(t)>0$\textit{ on }$[t_{0},t_{0}+T]${ the
proposed method is convergent on this trajectory. In particular, there exist
}$\eta>0$ depending on the trajectory $V(\cdot)$ only, and $r_{0}>0$ depending
on $\eta$ {such that }%
\[
\sqrt{\overline{V}(t_{m})}\geq\eta\geq r\sigma,\text{ \ \ for }m=0,1,2,...
\]
for any $r<r_{0}.$ So in particular {(\ref{CT1}) is fulfilled for }all
$m=0,1,...,$ and{ the estimate (\ref{R10}) implies that for any }$r<r_{0},$%
\[
\left\vert \sqrt{V(t_{n+1})}-\sqrt{\overline{V}(t_{n+1})}\right\vert \leq
r\left(  D_{1}+\frac{D_{2}}{\eta^{2}}\right)  T,\text{ \ \ }n=0,1,2,...,\nu.
\]

\end{theorem}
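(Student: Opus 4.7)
The plan is to run an induction on $m$ showing simultaneously that the hypothesis (\ref{CT1}) holds and the cumulative error bound is controlled. The key input is that, since $V(\cdot)$ is assumed continuous and strictly positive on the compact interval $[t_{0},t_{0}+T]$, it attains a strictly positive minimum there; from this I can extract the threshold $\eta$ independent of $r$.

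First I would define $V_{\min}:=\min_{t_{0}\leq t\leq t_{0}+T}V(t)>0$ and set $\eta:=\tfrac{1}{2}\sqrt{V_{\min}}$, so that $\sqrt{V(t)}\geq 2\eta$ for every $t$ in the interval. Then I would fix
\[
r_{0}:=\min\!\left(\frac{\eta}{\sigma},\ \frac{\eta}{(D_{1}+D_{2}/\eta^{2})T}\right),
\]
so that the two conditions required by the scheme (the admissibility inequality $\sigma r\leq\eta$ from Section~4, and the absorption of the accumulated error into the margin $2\eta-\eta=\eta$) are both satisfied whenever $r<r_{0}$.

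Next I would carry out the induction. For $m=0$ we have $\sqrt{\overline{V}(t_{0})}=\sqrt{V(t_{0})}\geq 2\eta\geq\eta\geq\sigma r$, which verifies the base case together with (\ref{CT1}) at $m=0$. Suppose now that $\sqrt{\overline{V}(t_{j})}\geq\eta\geq\sigma r$ for every $j=0,1,\dots,n$. Then the one-step construction from Section~4 is legitimate at each previous step and the global estimate (\ref{R10}) is available; bounding each term $D_{2}/\overline{V}(t_{j})$ by $D_{2}/\eta^{2}$ and using $\sum_{j=0}^{n}(t_{j+1}-t_{j})\leq T$, I obtain
\[
\left\vert \sqrt{V(t_{n+1})}-\sqrt{\overline{V}(t_{n+1})}\right\vert \leq r\!\left(D_{1}+\frac{D_{2}}{\eta^{2}}\right)\!T.
\]
Using $\sqrt{V(t_{n+1})}\geq 2\eta$ and the choice of $r_{0}$, the reverse triangle inequality yields $\sqrt{\overline{V}(t_{n+1})}\geq 2\eta-\eta=\eta\geq\sigma r$, which closes the induction and in particular proves the stated uniform bound on $\sqrt{\overline{V}(t_{n+1})}$ for all $n$. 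The final error estimate claimed in the theorem is then just a re-reading of (\ref{R10}) under the now-verified hypothesis (\ref{CT1}).

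The only mildly delicate point, and the one I would be most careful about, is that $\eta$ and $r_{0}$ must depend only on the trajectory $V(\cdot)$ (through $V_{\min}$) and on the fixed parameters $k,\lambda,\sigma,T$, not on the particular realization of the random stopping times $t_{m}$; this is precisely what makes the induction uniform in the random mesh. The argument above ensures this because the bound (\ref{R10}) telescopes to $r(D_{1}+D_{2}/\eta^{2})(t_{n+1}-t_{0})\leq r(D_{1}+D_{2}/\eta^{2})T$ regardless of how the grid points $t_{m}$ happen to fall. If one also wishes to accommodate the modified last step in which $w(t_{0}+T)-w(t_{\nu})$ is set to zero, an extra additive $\sigma r$ appears (cf.~(\ref{mod})) and $r_{0}$ should be shrunk by a harmless constant factor to absorb it; this does not affect the order $O(r)$ of convergence.
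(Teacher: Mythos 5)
Your proposal is correct and follows essentially the same route as the paper's own proof: the same choice $\eta=\tfrac12\min_{[t_0,t_0+T]}\sqrt{V(t)}$, the same $r_{0}$, and the same induction using (\ref{R10})--(\ref{R11}) to keep $\sqrt{\overline{V}(t_{m})}\geq\eta\geq\sigma r$ at every step. The additional remarks on uniformity in the random mesh and on absorbing the extra $\sigma r$ from the modified last step are consistent with the paper's discussion around (\ref{mod}).
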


\begin{proof}
Let us define%
\begin{align}
\eta &  :=\frac{1}{2}\min_{t_{0}\leq t\leq t_{0}+T}\sqrt{V(t)}\text{
\ \ and}\nonumber\\
r_{0}  &  :=\min\left(  \frac{\eta}{\sigma},\frac{\eta}{\left(  D_{1}%
+\frac{D_{2}}{\eta^{2}}\right)  T}\right)  , \label{sec}%
\end{align}
and let $r<r_{0}.$ We then claim that for all $m,$
\begin{equation}
\sqrt{\overline{V}(t_{m})}\geq\eta\geq r\sigma. \label{ind}%
\end{equation}
For $m=0$ we trivially have
\[
\sqrt{\overline{V}(t_{0})}=\sqrt{V(t_{0})}\geq2\eta\geq\eta\geq r_{0}%
\sigma\geq r\sigma.
\]
Now suppose by induction that $\sqrt{\overline{V}(t_{j})}\geq\eta$ for
$j=0,...,m.$Then due to (\ref{R11}) we have%
\[
\left\vert \sqrt{V(t_{m+1})}-\sqrt{\overline{V}(t_{m+1})}\right\vert \leq
r\left(  D_{1}+\frac{D_{2}}{\eta^{2}}\right)  T\leq r_{0}\left(  D_{1}%
+\frac{D_{2}}{\eta^{2}}\right)  T\leq\eta
\]
because of (\ref{sec}). Thus, since $\sqrt{V(t_{m+1})}\geq2\eta,$ it follows
that $\sqrt{\overline{V}(t_{m+1})}\geq\eta\geq r\sigma.$ This proves
(\ref{ind}) and the convergence for $r\downarrow0.$
\end{proof}

\begin{remark}
\label{Remark 9}. In the case where $4k\lambda\geq\sigma^{2}>2k\lambda$
trajectories will reach zero with positive probability, that is convergence on
such trajectories is not guaranteed by Theorem~\ref{Theorem 8}. So it is
important to develop some method for continuing the simulations in cases of
very small $\overline{V}(t_{m}).$ One can propose different procedures, for
instance, one can proceed with standard SDE approximation methods relying on
some known scheme suitable for small $V$ (e.g. see \cite{Anders}). However,
the uniformity of the simulation would be destroyed in this way. We therefore
propose in the next section a uniform simulation method that may be started in
a value $\overline{V}(t_{m})$ close to zero.
\end{remark}

\section{Simulation of trajectories near to zero}

Henceforth we assume that $\alpha>0.$ Let us suppose that $\sqrt{\overline
{V}(t_{n})}=y_{n}\geq\sigma r$ and consider conditions that guarantee that
$\sqrt{\overline{V}(t_{n+1})}\geq\sigma r$ under $t_{n+1}\leq t_{0}+T.$ Of
course in the case $\xi_{n}=1$ this is trivially fulfilled, and we thus
consider the case $\xi_{n}=-1,$ yielding%
\[
\sqrt{\overline{V}(t_{n+1})}=y^{n}(t_{n+1})-\frac{\sigma r}{2}=[y_{n}%
^{2}e^{-k(t_{n+1}-t_{n})}+\frac{2\alpha}{k}(1-e^{-k(t_{n+1}-t_{n})}%
)]^{1/2}-\frac{\sigma r}{2}.
\]
We so need%
\begin{align}
y_{n}^{2}e^{-k(t_{n+1}-t_{n})}+\frac{2\alpha}{k}(1-e^{-k(t_{n+1}-t_{n})})  &
\geq\frac{9\sigma^{2}r^{2}}{4},\text{ \ \ i.e.}\nonumber\\
\left(  y_{n}^{2}-\frac{2\alpha}{k}\right)  e^{-k(t_{n+1}-t_{n})}  &
\geq\frac{9\sigma^{2}r^{2}}{4}-\frac{2\alpha}{k}. \label{cp1}%
\end{align}
Since we are interested in properties of algorithms when $r\downarrow0,$ we
may further assume w.l.o.g. that $9\sigma^{2}r^{2}/4-2\alpha/k<0,$ i.e.
\begin{equation}
r<\frac{2}{3}\sqrt{\frac{2\alpha}{k\sigma^{2}}}. \label{Ar}%
\end{equation}
Under assumption (\ref{Ar}), (\ref{cp1}) is obviously fulfilled when
$y_{n}\geq\sqrt{2\alpha/k}.$ If $y_{n}<\sqrt{2\alpha/k}$ we need%
\[
e^{-k(t_{n+1}-t_{n})}\leq\frac{\frac{9\sigma^{2}r^{2}}{4}-\frac{2\alpha}{k}%
}{y_{n}^{2}-\frac{2\alpha}{k}},\text{ \ \ \ hence \ }t_{n+1}-t_{n}\geq\frac
{1}{k}\ln\frac{\frac{2\alpha}{k}-y_{n}^{2}}{\frac{2\alpha}{k}-\frac
{9\sigma^{2}r^{2}}{4}},
\]
which is fulfilled if%
\begin{equation}
y_{n}\geq\frac{3}{2}\sigma r. \label{cp11}%
\end{equation}
Note that (\ref{Ar}) is equivalent with $3\sigma r/2<$ $\sqrt{2\alpha/k},$ and
so (\ref{cp11}) is the condition we were looking for. Conversely, if $\sigma
r\leq y_{n}<3\sigma r/2,$ then $\sqrt{\overline{V}(t_{n+1})}<$ $\sigma r$ with
positive probability. In view of the above considerations, one may carry out
the algorithm of Subsection~\ref{simal} as long as (\ref{cp11}) is fulfilled.
Let us say that $\mathfrak{n}$ was the last step where (\ref{cp11}) was true.
Then the aggregated error of $\sqrt{\overline{V}(t_{n+1})}$ due to the
algorithm up to step $\mathfrak{n}$ may be estimated by (cf. (\ref{CT1}) and
(\ref{CT02})),%
\begin{gather}
\left\vert \sqrt{V(t_{\mathfrak{n}+1})}-\sqrt{\overline{V}(t_{\mathfrak{n}%
+1})}\right\vert \leq\nonumber\\
r\sum_{m=0}^{\mathfrak{n}}\left(  D_{1}+\frac{D_{2}}{y_{m}^{2}}\right)
\left(  t_{m+1}-t_{m}\right)  =r\sum_{m=0}^{\mathfrak{n}}\left(  D_{1}%
+\frac{D_{2}}{\overline{V}(t_{m})}\right)  \left(  t_{m+1}-t_{m}\right)  .
\label{epsn}%
\end{gather}

\bigskip Let us recall that our primal goal is a scheme where $\sqrt
{V(t)}-\sqrt{\overline{V}(t)}\downarrow0,$ almost surely and uniformly in
$t_{0}\leq t\leq t_{0}+T.$ In this respect, and in particular in the case
$\sigma^{2}>2k\lambda$ where trajectories may attain zero with positive
probability, it is not recommended to carry out scheme \ref{simal} all the way
through until (\ref{cp11}) is not satisfied anymore. Indeed, if the trajectory
attains zero, the worst case almost sure error bound would then be when all
$y_{m}$ would be close to $3\sigma r/2,$ hence of order $O(1/r).$ That is, no
convergence on such trajectories. We therefore propose to perform scheme
\ref{simal} up to a (stopping) index $\mathfrak{m},$ defined by%
\begin{equation}
\sqrt{\overline{V}(t_{k})}\geq\dfrac{1}{2}Ar^{a}>\frac{3}{2}\sigma
r,\ k=0,1,...,\mathfrak{m},\text{ and }\sqrt{\overline{V}(t_{\mathfrak{m}+1}%
)}<\dfrac{1}{2}Ar^{a}, \label{CZ1}%
\end{equation}
where $A$ is a positive constant and $0<a<1/2$ is to be determined suitably. A
pragmatic choice would be $a=1/3$ (see Remark~\ref{ma}). Due to (\ref{CZ1})
and (\ref{epsn}) with $\mathfrak{n}$\ replaced by $\mathfrak{m}$ we then have,%
\begin{equation}
\left\vert \sqrt{V(t_{\mathfrak{m}+1})}-\sqrt{\overline{V}(t_{\mathfrak{m}%
+1})}\right\vert \leq r\sum_{m=0}^{\mathfrak{m}}\left(  D_{1}+\frac{4D_{2}%
}{A^{2}r^{2a}}\right)  \left(  t_{m+1}-t_{m}\right)  \leq:D_{3}r^{1-2a}.
\label{CZ2}%
\end{equation}
for some constant $D_{3}>0.$

Let us now fix a realization $t_{m+1}:=t_{\mathfrak{m}+1},$ and consider two
solutions of equation (\ref{In1}) starting at the moment $t_{m+1}$ from
$\overline{v}:=\overline{V}(t_{m+1})$ (known value) and $v=V(t_{m+1})$ (true
but unknown value), denoted by $V_{t_{m+1},\overline{v}}$ and $V_{t_{m+1},v},$
respectively, Let $\vartheta_{x},$ $0\leq x<A^{2}r^{2a},$ be the first time at
which the solution $V_{t_{m+1},x}(t+t_{m+1})$ of (\ref{In1}) attains the level
$A^{2}r^{2a},$ hence%
\[
0\leq V_{t_{m+1},x}(t_{m+1}+t)<A^{2}r^{2a},\ 0\leq t<\vartheta_{x}%
,\ V_{t_{m+1},x}(t_{m+1}+\vartheta_{x})=A^{2}r^{2a}.
\]
A construction of the distribution function of $\vartheta_{x}$ is worked out
in Section \ref{tetax}. Let us now denote $t_{m+2}:=t_{m+1}+\vartheta$ with
$\vartheta=\vartheta_{\overline{v}}.$ (For simplicity and w.l.og. we assume
that $t_{m+2}<t_{0}+T).$ We then naturally set
\[
\overline{V}(t_{m+2})=V_{t_{m+1},\overline{v}}(t_{m+1}+\vartheta)=A^{2}%
r^{2a}.
\]
The solutions $V_{t_{m+1},\overline{v}}$ and $V_{t_{m+1},v}$ correspond to two
solutions $\overline{Y}(t,t_{m+1})$ and $Y(t,t_{m+1})$ of (\ref{OS4}) with
$\varphi(t)=w(t)-w(t_{m+1}),$ $t\geq t_{m+1},$ starting in $\overline
{Y}(t_{m+1},t_{m+1})=\sqrt{\overline{v}}$ and $Y(t_{m+1},t_{m+1})=\sqrt{v},$
respectively. Due to Lemma \ref{Lemma 2}, see Remark \ref{L2*}, and
(\ref{CZ2}) it thus follows that%
\begin{gather}
\left\vert \sqrt{V_{t_{m+1},v}(t)}-\sqrt{V_{t_{m+1},\overline{v}}%
(t)}\right\vert =\left\vert Y(t,t_{m+1})-\overline{Y}(t,t_{m+1})\right\vert
\leq\left\vert \sqrt{V(t_{m+1})}-\sqrt{\overline{V}(t_{m+1})}\right\vert
\nonumber\\
\leq D_{3}r^{1-2a},\text{ \ }t_{m+1}\leq t\leq t_{m+2}, \label{ba1}%
\end{gather}
and in particular%
\[
\sqrt{V(t_{m+2})}-\sqrt{\overline{V}(t_{m+2})}\leq D_{3}r^{1-2a}.
\]
In contrast to the previous steps we now specify the behavior of $\overline
{V}(t)$ on $[t_{m+1},t_{m+2}]$ by
\begin{equation}
\overline{V}(t)=V_{t_{m+1},\overline{v}}(t),\text{ \ }t_{m+1}\leq t\leq
t_{m+2}, \label{balke}%
\end{equation}
which we actually do not know. However, we do know that $\overline{V}%
(t_{m+2})=V_{t_{m+1},\overline{v}}(t_{m+2})=$ $A^{2}r^{2a},$ and that
$\overline{V}$ is bounded on $[t_{m+1},t_{m+2}]$ by $A^{2}r^{2a}.$ Therefore,
if we just take a straight line $L(t)$\ that connects the points
$(t_{m+1},\sqrt{\overline{v}})$ and $(t_{m+2},Ar^{a})$ as an approximation for
$\sqrt{\overline{V}(t)},$ then $\sqrt{\overline{V}(t)}-L(t)\leq Ar^{a},$
$\ t_{m+1}\leq t\leq t_{m+2}.$ By (\ref{ba1}) and (\ref{balke}) we then also
have
\[
\sqrt{V(t)}-L(t)\leq Ar^{a},\ t_{m+1}\leq t\leq t_{m+2}.
\]
Thus, the accuracy of the approximation to $\sqrt{V}$ for $0\leq t\leq
t_{m+1}$ outside the band $\left(  0,\dfrac{1}{2}Ar^{a}\right)  $ is of order
$O(r^{1-2a}),$ and for $t_{m+1}<t<t_{m+2}$ inside the band $\left(
0,Ar^{a}\right)  $ of order $O(r^{a}).$ But, at the boundary point
$\overline{V}(t_{m+2})=$ $A^{2}r^{2a}$ the accuracy is of order $O(r^{1-2a})$
again. Finally, the scheme may be continued from the state
\[
\sqrt{\overline{V}(t_{m+2})}=Ar^{a}%
\]
with the algorithm of Subsection \ref{simal}.

\begin{remark}
\label{ma} From the above construction it is clear that for $a=1/3$ in
(\ref{CZ1}) the accuracy for $0\leq t\leq t_{m+1}$ outside the band $\left(
0,\dfrac{1}{2}Ar^{a}\right)  ,$ and for $t_{m+1}<t<t_{m+2}$ inside the band
$\left(  0,Ar^{a}\right)  $ are of the same order. However, an exponent
$0<a<1/3$ would give a higher accuracy outside the band $\left(  0,\dfrac
{1}{2}Ar^{a}\right)  $ and at the exit points of the band $\left(
0,Ar^{a}\right)  ,$ while inside the band the accuracy is worse but uniformly
bounded by $Ar^{a}.$
\end{remark}

\subsection{Simulation of $\vartheta_{x}$}

\label{tetax} In order to carry out the above simulation method for
trajectories near zero we have to find the distribution function of
$\vartheta_{x}=\vartheta_{x,l},$ where $\vartheta_{x,l}$ is the first-passage
time of the trajectory $X_{0,x}(s),$ to the level $l.$ For this it is more
convenient to change notation and to write (\ref{In1}) in the form%
\begin{equation}
dX(s)=k(\lambda-X(s))ds+\sigma\sqrt{X}dw(s),\ X(0)=x, \label{Z1}%
\end{equation}
where without loss of generality we take the initial time to be $s=0.$ The
function%
\[
u(t,x):=P(\vartheta_{x,l}<t),
\]
is the solution of the first boundary value problem of parabolic type
(\cite{MT1}, Ch. 5, Sect. 3)%
\begin{equation}
\frac{\partial u}{\partial t}=\frac{1}{2}\sigma^{2}x\frac{\partial^{2}%
u}{\partial x^{2}}+k(\lambda-x)\frac{\partial u}{\partial x},\ t>0,\ 0<x<l,
\label{Z2}%
\end{equation}
with initial data%
\begin{equation}
u(0,x)=0, \label{Z3}%
\end{equation}
and boundary conditions%
\begin{equation}
u(t,0)\text{ is bounded, }u(t,l)=1. \label{Z4}%
\end{equation}
To get homogeneous boundary conditions we introduce $v=u-1.$ The function $v$
then satisfies:%
\begin{equation}
\frac{\partial v}{\partial t}=\frac{1}{2}\sigma^{2}x\frac{\partial^{2}%
v}{\partial x^{2}}+k(\lambda-x)\frac{\partial v}{\partial x},\ t>0,\ 0<x<l,
\label{Z5}%
\end{equation}%
\begin{equation}
v(0,x)=-1;\ v(t,0)\text{ is bounded, }v(t,l)=1. \label{Z6}%
\end{equation}
The problem (\ref{Z5})-(\ref{Z6}) can be solved by the method of separation of
variables. In this way the Sturm-Liouville problem for the confluent
hypergeometric equation (the Kummer equation) arises . This problem is rather
complicated however. Below we are going to solve an easier problem as a good
approximation to (\ref{Z5})-(\ref{Z6}). Along with (\ref{Z1}), let us consider
the equations%
\begin{align}
dX^{+}(s)  &  =k\lambda ds+\sigma\sqrt{X^{+}}dw(s),\ X^{+}(0)=x,\label{Z7}\\
dX^{-}(s)  &  =k(\lambda-l)ds+\sigma\sqrt{X^{-}}dw(s),\ X^{-}(0)=x, \label{Z8}%
\end{align}
with $0\leq l<\lambda.$ It is not difficult to prove the following
inequalities%
\begin{equation}
X^{-}(s)\leq X(s)\leq X^{+}(s). \label{Z9}%
\end{equation}
According to (\ref{Z9}), we consider three boundary value problems: first
(\ref{Z2})-(\ref{Z4}) and next similar ones for the equations%
\begin{align}
\frac{\partial u^{+}}{\partial t}  &  =\frac{1}{2}\sigma^{2}x\frac
{\partial^{2}u^{+}}{\partial x^{2}}+k\lambda\frac{\partial u^{+}}{\partial
x},\ t>0,\ 0<x<l,\nonumber\\
\frac{\partial u^{-}}{\partial t}  &  =\frac{1}{2}\sigma^{2}x\frac
{\partial^{2}u^{-}}{\partial x^{2}}+k(\lambda-l)\frac{\partial u^{-}}{\partial
x},\ t>0,\ 0<x<l. \label{min}%
\end{align}
From (\ref{Z9}) it follows that%
\[
u^{-}(t,x)\leq u(t,x)\leq u^{+}(t,x),
\]
hence%
\[
v^{-}(t,x)\leq v(t,x)\leq v^{+}(t,x),
\]
where $v^{-}=u^{-}-1,\ v^{+}=u^{+}-1.$

As the band $0<x<l=A^{2}r^{2a},$ for a certain $a>0,$ is narrow due to small
enough $r,$ the difference $v^{+}-v^{-}$ will be small and so we can consider
the following problem%
\begin{equation}
\frac{\partial v^{+}}{\partial t}=\frac{1}{2}\sigma^{2}x\frac{\partial
^{2}v^{+}}{\partial x^{2}}+k\lambda\frac{\partial v^{+}}{\partial
x},\ t>0,\ 0<x<l,\label{Z10}%
\end{equation}%
\begin{equation}
v^{+}(0,x)=-1;\ v^{+}(t,0)\text{ is bounded, }v^{+}(t,l)=0,\label{Z11}%
\end{equation}
as a good approximation of (\ref{Z5})-(\ref{Z6}). Henceforth we write
$v:=v^{+}.$ By separation of variables we get as elementary independent
solutions to (\ref{Z10}), $\mathcal{T}(t)\mathcal{X}(x),$ where%
\begin{gather}
\mathcal{T}^{\prime}(t)+\mu\mathcal{T}(t)=0,\text{ \ \ i.e. \ \ }%
\mathcal{T}(t)=\mathcal{T}_{0}e^{-\mu t},\text{ \ \ }\mu>0,\text{
\ \ and}\label{chi0}\\
\frac{1}{2}\sigma^{2}x\mathcal{X}^{\prime\prime}+k\lambda\mathcal{X}^{\prime
}+\mu\mathcal{X}=0,\text{ \ \ }\mathcal{X}(0+)\text{ is bounded,
\ }\mathcal{X}(l)=0.\text{ }\label{Z13}%
\end{gather}
It can be verified straightforwardly that the solution of (\ref{Z13}) can be
obtained in terms of Bessel functions of the first kind (e.g. see \cite{BE}),
\[
\mathcal{X}(x)=\mathcal{X}_{\gamma}^{\pm}(x):=x^{\gamma}J_{\pm2\gamma}\left(
\sigma^{-1}\sqrt{8\mu x}\right)  =x^{\gamma}O(x^{\pm\gamma})\ \ \ \ \text{if}%
\ \text{\ \ }\ x\downarrow0,
\]
with
\begin{equation}
\gamma:=\frac{1}{2}-\frac{k\lambda}{\sigma^{2}}.\label{gam}%
\end{equation}
Since $\mathcal{X}(x)$ has to be bounded for $x\downarrow0$ we may take
(regardless the sign of $\gamma$ (!))
\begin{equation}
\mathcal{X}(x)=\mathcal{X}_{\gamma}^{-}(x)=:\mathcal{X}_{\gamma}(x)=x^{\gamma
}J_{-2\gamma}\left(  \sigma^{-1}\sqrt{8\mu x}\right)  .\label{chi}%
\end{equation}
In our setting we have $\alpha>0,$ i.e. $\gamma<1/4.$

The following derivation of a Fourier-Bessel series for $v$ is standard but
included for  convenience of the reader. Denote the positive zeros of
$J_{\nu}$ by $\pi_{\nu,m},$ for example,
\begin{equation}
J_{1/2}(x)=\sqrt{\dfrac{2}{\pi x}}\sin x\text{ \ \ and \ \ }\pi_{1/2,m}%
=m\pi,\text{ \ \ }m=1,2,...\label{jex}%
\end{equation}
Then the (homogeneous) boundary condition $\mathcal{X}_{\gamma}(l)=0$ yields%
\begin{equation}
\sigma^{-1}\sqrt{8\mu l}=\pi_{-2\gamma,m},\text{ \ \ i.e., \ \ }\mu_{m}%
:=\frac{\sigma^{2}\pi_{-2\gamma,m}^{2}}{8l}\label{chi1}%
\end{equation}
and we have%
\[
\mathcal{X}_{\gamma,m}(x):=x^{\gamma}J_{-2\gamma}\left(  \sigma^{-1}\sqrt
{8\mu_{m}x}\right)  =x^{\gamma}J_{-2\gamma}\left(  \pi_{-2\gamma,m}\sqrt
{\frac{x}{l}}\right)  .
\]
By the well-known orthogonality relation%
\[
\int_{0}^{1}zJ_{-2\gamma}(\pi_{-2\gamma,k}z)J_{-2\gamma}(\pi_{-2\gamma
,k^{\prime}}z)dz=\frac{\delta_{k,k^{\prime}}}{2}J_{-2\gamma+1}^{2}%
(\pi_{-2\gamma,k}),
\]
we get by setting $z=\sqrt{x/l}$
\begin{align*}
\int_{0}^{l}J_{-2\gamma}(\pi_{-2\gamma,m}\sqrt{\frac{x}{l}})J_{-2\gamma}%
(\pi_{-2\gamma,m^{\prime}}\sqrt{\frac{x}{l}})dx &  =l\delta_{m,m^{\prime}%
}J_{-2\gamma+1}^{2}(\pi_{-2\gamma,m}),\text{ \ \ hence}\\
\int_{0}^{l}\mathcal{X}_{\gamma,m}(x)\mathcal{X}_{\gamma,m^{\prime}%
}(x)x^{-2\gamma}dx &  =l\delta_{m,m^{\prime}}J_{-2\gamma+1}^{2}(\pi
_{-2\gamma,m}).
\end{align*}
Now set%
\begin{equation}
v(t,x)=\sum_{m=1}^{\infty}\beta_{m}e^{-\mu_{m}t}\mathcal{X}_{\gamma
,m}(x),\text{ \ \ \ }0\leq x\leq l.\label{ser}%
\end{equation}
For $t=0$ we have due to the initial condition $v(0,x)=-1,$%
\[
-1=\sum_{m=1}^{\infty}\beta_{m}\mathcal{X}_{\gamma,m}(x).
\]
So for any $p=1,2,...,$%
\begin{align}
-\int_{0}^{l}\mathcal{X}_{\gamma,p}(x)x^{-2\gamma}dx &  =\beta_{p}%
lJ_{-2\gamma+1}^{2}(\pi_{-2\gamma_{\kappa},p}),\text{ \ \ i.e.}\nonumber\\
\beta_{p} &  =-\frac{\int_{0}^{l}\mathcal{X}_{\gamma,p}(x)x^{-2\gamma}%
dx}{lJ_{-2\gamma+1}^{2}(\pi_{-2\gamma,p})}.\label{chi2}%
\end{align}
Further it holds that%
\begin{align*}
\int_{0}^{l}\mathcal{X}_{\gamma,p}(x)x^{-2\gamma}dx &  =\int_{0}^{l}%
x^{-\gamma}J_{-2\gamma}\left(  \pi_{-2\gamma,p}\sqrt{\frac{x}{l}}\right)  dx\\
&  =2l^{-\gamma+1}\int_{0}^{1}z^{-2\gamma+1}J_{-2\gamma}\left(  \pi
_{-2\gamma,p}z\right)  dz\\
&  =2l^{-\gamma+1}\frac{J_{-2\gamma+1}\left(  \pi_{-2\gamma,p}\right)  }%
{\pi_{-2\gamma,p}}%
\end{align*}
by well-known identities for Bessel functions (e.g. see \cite{BE}), and (\ref{chi2}) thus
becomes
\begin{equation}
\beta_{p}=-\frac{2}{l^{\gamma}\pi_{-2\gamma,p}J_{-2\gamma+1}(\pi_{-2\gamma
,p})},\text{ \ \ }p=1,2,.....\label{chi3}%
\end{equation}
So, from $v=u-1,$ (\ref{chi0}) (\ref{chi}), (\ref{chi1}), (\ref{chi3}), and
(\ref{ser}) we finally obtain%
\begin{equation}
u(t,x)=1-2x^{\gamma}l^{-\gamma}\sum_{m=1}^{\infty}\frac{J_{-2\gamma}\left(
\pi_{-2\gamma,m}\sqrt{\frac{x}{l}}\right)  }{\pi_{-2\gamma,m}J_{-2\gamma
+1}(\pi_{-2\gamma,m})}\exp\left[  -\frac{\sigma^{2}\pi_{-2\gamma,m}^{2}}%
{8l}t\right]  \,,\text{ \ \ }0\leq x\leq l.\label{FB1}%
\end{equation}

\begin{example}
\label{test} For $\gamma=-1/4$ we get from (\ref{FB1}) by (\ref{jex})
straightforwardly,%
\[
u(t,x)=1+\frac{2}{\pi}\sqrt{\frac{l}{x}}\sum_{m=1}^{\infty}\frac{(-1)^{m}}%
{m}\sin\left(  \pi m\sqrt{\frac{x}{l}}\right)  \exp\left[  -\frac{\sigma
^{2}\pi^{2}m^{2}}{8l}t\right]  .
\]

\end{example}

\bigskip

For solving (\ref{min}) we set $\lambda^{-}:=\lambda-l,$ and then apply the
Fourier-Bessel series (\ref{FB1}) with $\gamma$ replaced by
\begin{equation}
\gamma^{-}:=\frac{1}{2}-\frac{k\lambda^{-}}{\sigma^{2}}=\gamma+\frac
{kl}{\sigma^{2}}. \label{kap}%
\end{equation}

\begin{example}
We now consider some numerical examples concerning $u^{+}=u$ in (\ref{FB1})
and $u^{-}$ given by (\ref{FB1}) due to (\ref{kap}). Note that actually in
(\ref{FB1}) the function $u$ only depends on $\sigma,l,$ and $\gamma.$ That
is, $u$ depends on $\sigma,l,$ and the product $k\lambda.$ Let us consider a
CIR process with $\sigma=1,$ $\lambda=1,$ $k=0.75,$ and let us take $l=0.1.$
We then compare $u^{+},$ which is given by (\ref{FB1}) for $\gamma=-0.25$ due
to (\ref{gam}) (see Example~\ref{test}), with $u^{-}$ given by (\ref{FB1}) for
$\gamma^{-}=-0.175$ due to (\ref{kap}). The results are depicted in
Figure~\ref{plot}. The sums corresponding to (\ref{FB1}) are computed with
five terms (more terms did not give any improvement).
\end{example}

\subsubsection*{Normalization of $u(t,x)$}

For practical applications it is useful to normalize (\ref{FB1}) in the
following way. Let us treat $\gamma$ as essential but fixed parameter,
introduce as new parameters
\[
\frac{x}{l}=\widetilde{x},\text{ \ \ }0<\widetilde{x}\leq1,\text{ \ \ }%
\frac{\sigma^{2}t}{8l}=\widetilde{t},\text{ \ \ }\widetilde{t}\geq0,
\]
and consider the function%
\[
\widetilde{u}(\widetilde{t},\widetilde{x}):=1-2\widetilde{x}^{\gamma}%
\sum_{m=1}^{\infty}\frac{J_{-2\gamma}\left(  \pi_{-2\gamma,m}\sqrt
{\widetilde{x}}\right)  }{\pi_{-2\gamma,m}J_{-2\gamma+1}(\pi_{-2\gamma,m}%
)}\exp\left[  -\pi_{-2\gamma,m}^{2}\widetilde{t}\right]  \,,\ \ 0<\widetilde
{x}\leq1,\text{ \ \ }\widetilde{t}\geq0,
\]
that is connected to (\ref{FB1}) via%
\[
\widetilde{u}(\widetilde{t},\widetilde{x})=\widetilde{u}(\frac{\sigma^{2}%
t}{8l},\frac{x}{l})=u(\frac{8l\widetilde{t}}{\sigma^{2}},l\widetilde{x}).
\]
For simulation of $\vartheta_{x}$ we need to solve the equation%
\[
u(\vartheta_{x},x)=U,\text{ \ where }U\sim\text{Uniform}[0,1].
\]
For this we set $\widetilde{x}=x/l$ and solve the normalized equation
$\widetilde{u}(\widetilde{\vartheta}_{\widetilde{x}},\widetilde{x})=U,$ and
then take%
\[
\text{ }\vartheta_{x}=\frac{8l}{\sigma^{2}}\widetilde{\vartheta}%
_{\widetilde{x}}.
\]
Note that
\[
P(\vartheta_{x}<t)=P(\widetilde{\vartheta}_{\widetilde{x}}<\frac{\sigma^{2}%
t}{8l})=\widetilde{u}(\frac{\sigma^{2}t}{8l},\frac{x}{l}).
\]
We have plotted in Figure~\ref{normal} the normalized function $\widetilde
{u}(\widetilde{t},\widetilde{x})$ for $\gamma=-1/4.$

\begin{figure}[ptb]
\begin{center}
\includegraphics[scale=0.75]{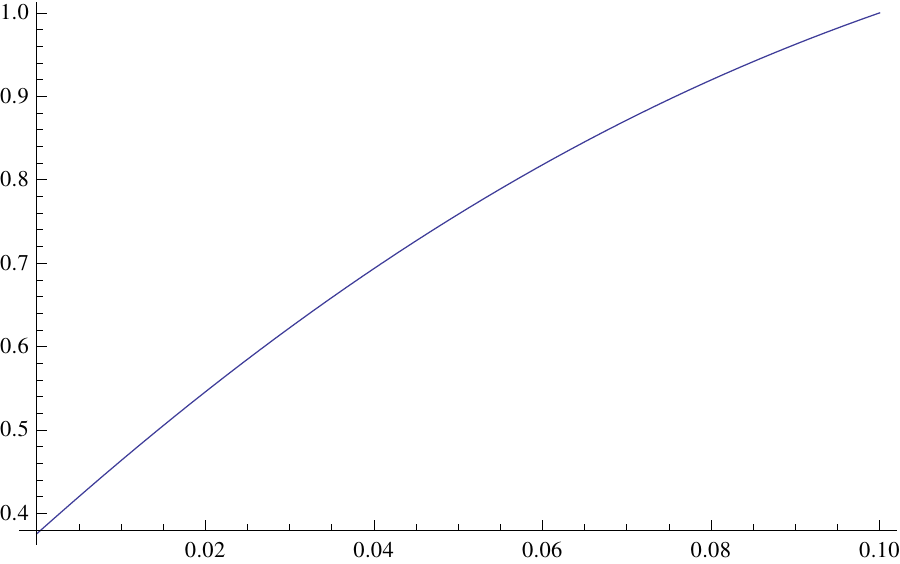} \includegraphics[scale=0.75]{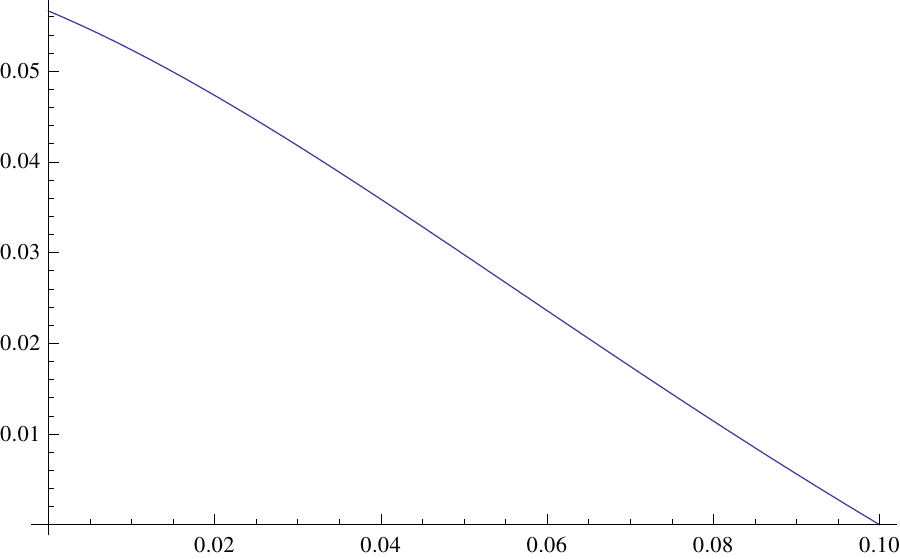}
\end{center}
\caption{Upper panel $u^{+}(0.1,x),$ lower panel $u^{+}(0.1,x)-u^{-}(0.1,x),$
for $0\le x\le0.1$}%
\label{plot}%
\end{figure}

\begin{figure}[ptb]
\begin{center}
\includegraphics[scale=0.75]{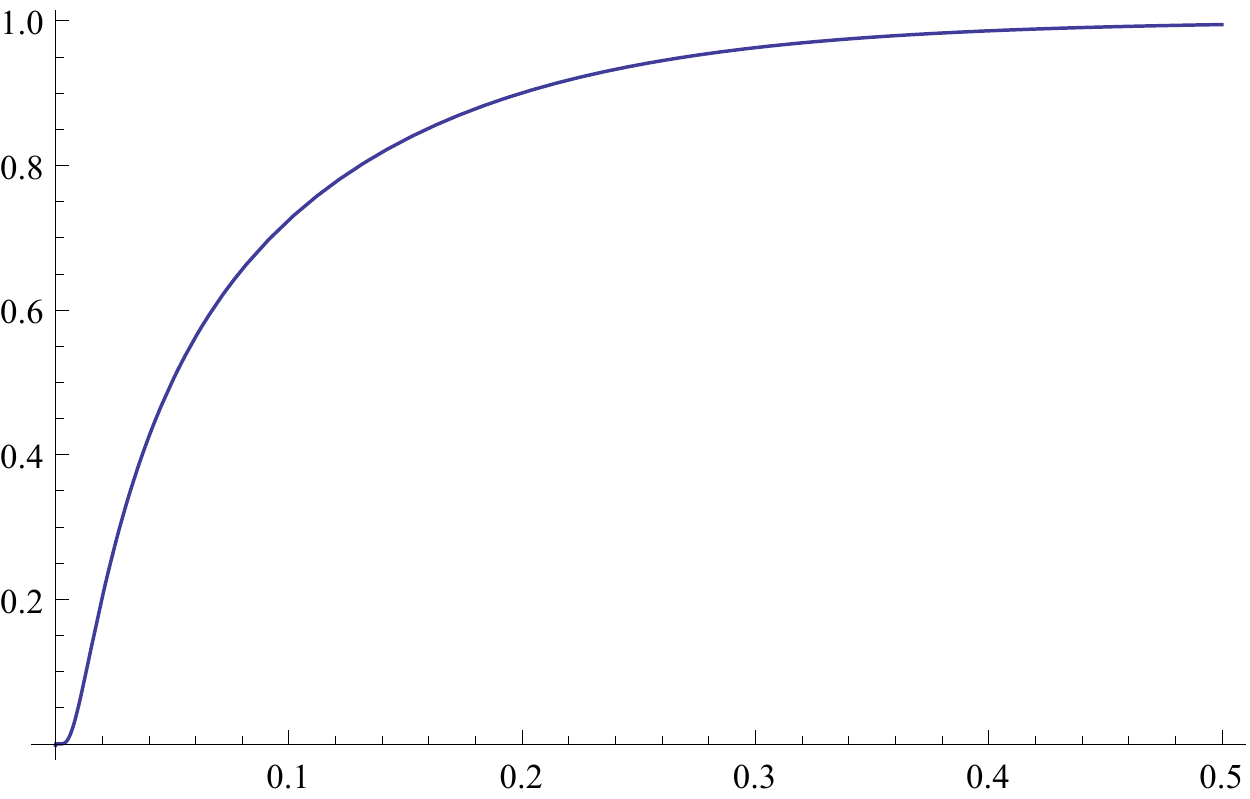}
\end{center}
\caption{Normalized distribution function $\widetilde u(\widetilde
t,\widetilde x)$ for $\gamma=-1/4$}%
\label{normal}%
\end{figure}

\newpage

\end{document}